\documentclass{amsart}
\usepackage{amssymb}
\usepackage{mathtools}
\usepackage{url}

\renewcommand{\leq}{\leqslant}
\renewcommand{\geq}{\geqslant}
\newcommand{\ZZ}{\mathbb{Z}}
\newcommand{\RR}{\mathbb{R}}
\newcommand{\divides}{\mathrel{|}}
\newcommand{\ndivides}{\mathrel{\nmid}}

\DeclareMathOperator{\Span}{span}

\DeclareMathOperator{\res}{res}
\newcommand{\norm}[1]{\left \lVert #1 \right \rVert}
\newcommand{\tnorm}[1]{\lVert #1 \rVert}
\newcommand{\supnorm}[1]{\norm{#1}_{\infty}}
\newcommand{\tsupnorm}[1]{\tnorm{#1}_{\infty}}

\newtheorem{thm}{Theorem}[section]
\newtheorem{lem}[thm]{Lemma}
\newtheorem{prop}[thm]{Proposition}

\theoremstyle{definition}

\theoremstyle{remark}
\newtheorem{rem}[thm]{Remark}

\numberwithin{equation}{section}

\begin{document}

\title{A deterministic algorithm for finding $r$-power divisors}

\author[D. Harvey]{David Harvey}
\address{School of Mathematics and Statistics, University of New South Wales, Sydney NSW 2052, Australia}
\email{d.harvey@unsw.edu.au}
\thanks{The first author was supported by the Australian Research Council (grant FT160100219).}

\author[M. Hittmeir]{Markus Hittmeir}
\address{SBA Research, Floragasse 7, A-1040 Vienna}
\email{mhittmeir@sba-research.org}
\thanks{SBA Research (SBA-K1) is a COMET Centre within the framework of COMET – Competence Centers for Excellent Technologies Programme
	and funded by BMK, BMDW, and the federal state of Vienna. The COMET Programme is managed by FFG}

\begin{abstract}
   Building on work of Boneh, Durfee and Howgrave-Graham,
   we present a deterministic algorithm that provably finds all integers $p$
   such that $p^r \divides N$ in time $O(N^{1/4r+\epsilon})$
   for any $\epsilon > 0$.
   For example, the algorithm can be used to test squarefreeness
   of $N$ in time $O(N^{1/8+\epsilon})$;
   previously, the best rigorous bound for this problem was
   $O(N^{1/6+\epsilon})$, achieved via the Pollard--Strassen method.
\end{abstract}

\maketitle

\section{Introduction}

\subsection{Statement of main result}

Let $r$ be a positive integer.
In this paper we study the problem of finding all
$r$\nobreakdash-power divisors of a given positive integer~$N$,
i.e., all positive integers $p$ such that ${p^r \divides N}$.
Throughout the paper we write $\lg x \coloneqq \log_2 x$,
and unless otherwise specified,
the ``running time'' of an algorithm refers to
the number of bit operations it performs,
or more formally, the number of steps executed by a deterministic
multitape Turing machine \cite{Pap-complexity}.
We always assume the use of fast (quasilinear time)
algorithms for basic integer arithmetic, i.e.,
for multiplication, division and GCD
(see for example \cite{MCA_2013} or \cite{BZ-mca}).

Our main result is the following theorem.
\begin{thm}
   \label{thm:main}
   There is an explicit deterministic algorithm with the following properties.
   It takes as input an integer $N \geq 2$
   and a positive integer $r \leq \lg N$.
   Its output is a list of all positive integers $p$ such that $p^r \divides N$.
   Its running time is
   \begin{equation}
      \label{eq:main-bound}
      O\left( N^{1/4r} \cdot
      \frac{(\lg N)^{10+\epsilon}}{r^3} \right).
   \end{equation}
\end{thm}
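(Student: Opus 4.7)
The plan is to enumerate candidate positions for $p$ via a dyadic decomposition of the range $[1, N^{1/r}]$ and, within each dyadic scale, to run a deterministic Coppersmith/Boneh--Durfee--Howgrave-Graham (BDH) lattice procedure on short subintervals to detect $r$-power divisors of $N$.

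Concretely, for each $j = 1, 2, \ldots, \lceil (\lg N)/r \rceil$, form the scale $I_j \coloneqq [2^j, 2^{j+1})$; any $p \in I_j$ satisfies $b \coloneqq p^r \geq 2^{rj}$, so $b \geq N^{\beta_j}$ with $\beta_j \coloneqq rj/\lg N \leq 1$. Given a centre $\tilde p \in I_j$, the polynomial $f(x) \coloneqq (\tilde p + x)^r$ is monic of degree $r$ and has root $x_0 \coloneqq p - \tilde p$ modulo $b$ whenever $p$ is an $r$-power divisor of $N$ lying in $I_j$. The BDH extension of Coppersmith's theorem recovers $x_0$ (and hence $p$) in time polynomial in $\lg N$ and $1/\epsilon$ provided $|x_0| \leq N^{\beta_j^2/r - \epsilon}$. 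I would therefore tile $I_j$ by $\lceil 2^j/X_j \rceil$ centres spaced $2X_j$ apart, where $X_j \coloneqq N^{\beta_j^2/r - \epsilon}$, invoke BDH once per centre, and confirm each output with the check $p^r \divides N$.

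The number of lattice calls at scale $j$ is $O(2^j/X_j) = O(N^{(\beta_j - \beta_j^2)/r + \epsilon})$. The quadratic $\beta - \beta^2$ on $[0,1]$ attains its maximum value $1/4$ at $\beta = 1/2$, so each scale contributes at most $O(N^{1/(4r)+\epsilon})$ calls; summing over the $O((\lg N)/r)$ scales and absorbing the resulting $(\lg N)/r$ factor into~$\epsilon$ gives $O(N^{1/(4r)+\epsilon})$ lattice calls in total. The worst scale is $j^* \approx (\lg N)/(2r)$, i.e.\ $p \approx N^{1/(2r)}$, which is consistent with the $1/(4r)$ exponent in \eqref{eq:main-bound}. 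Each call runs LLL on a lattice whose dimension is polynomial in $r$ and $1/\epsilon$, with entries of bit-size $O(r \lg N)$, contributing the polylogarithmic overhead that I expect to match the explicit factor $(\lg N)^{10+\epsilon}/r^3$ in~\eqref{eq:main-bound}.

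The main obstacle, I expect, lies in the bookkeeping rather than the underlying idea: obtaining the precise exponents $10$ on $\lg N$ and $-3$ on $r$ demands an explicit accounting of the lattice dimension, the bit-size of its entries, and the dependence of a single LLL reduction on these parameters, instead of deferring to the usual ``polynomial in $\lg N$'' shorthand in the BDH literature. A secondary technicality concerns the very small scales where $X_j < 1$ and BDH degenerates: there $|I_j| \leq N^{1/(4r)}$, so the interval can be enumerated by brute force and the resulting work is absorbed into the overall bound without affecting the leading exponent.
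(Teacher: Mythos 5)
Your high-level plan matches the paper's: a dyadic decomposition of $[1,N^{1/r}]$, a Coppersmith/BDHG lattice search on each subinterval, and the observation that the exponent $\theta(1-\theta)/r$ is maximised at $\theta = 1/2$, giving the worst case at $p \sim N^{1/2r}$. The small scales are likewise handled by brute force. However, there is a genuine gap, and it is not the ``bookkeeping'' you defer but the actual content of the theorem. As written, your subinterval half-width is $X_j = N^{\beta_j^2/r - \epsilon}$ with $\epsilon$ fixed, and your lattice dimension is ``polynomial in $r$ and $1/\epsilon$''. This gives a number of lattice calls $O(N^{1/4r + \epsilon})$, with a genuine $N^\epsilon$ loss — strictly weaker than the claimed $O(N^{1/4r}\cdot (\lg N)^{10+\epsilon}/r^3)$, which is $N^{1/4r}$ times a \emph{polylogarithmic} factor. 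The paper explicitly remarks that the existence of a bound of the form $N^{1/4}(\lg N)^C$ with an explicit $C$ is exactly the novelty over the well-known $N^{1/4+\epsilon}$ statement, so proving only the latter does not establish the theorem.

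The missing idea is how to eliminate the $N^\epsilon$ loss: the paper does not take the lattice dimension to depend on $r$ (indeed it warns against the BDHG choice $d \approx r^2$), but sets $d = \lceil \lg N \rceil + 1$. With this choice, Lemma~3.3 gives $\tilde H > \tfrac13 N^{\theta^2/r - 1/(d-1)}$, and the loss factor $N^{1/(d-1)} \leq N^{1/\lg N} = 2$ is a \emph{constant}, not $N^\epsilon$. The lattice entries then have bit size $\ll (d/r)\lg N \asymp (\lg N)^2/r$, not $O(r\lg N)$ as you state, and a single LLL call costs $O(d^{5+\epsilon}((d/r)\lg N)^{2+\epsilon}) = O((\lg N)^{9+\epsilon}/r^2)$; multiplying by the $O(N^{1/4r})$ subintervals per dyadic scale and the $O((\lg N)/r)$ scales gives the claimed exponents $10$ and $-3$. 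Your brute-force cutoff also needs care: the paper checks $p \leq 2^k$ with $k = \lceil 2\sqrt{\lceil\lg N\rceil/r}\rceil$ precisely so that $2^k \ll N^{1/4r}$ and the precondition $\theta \geq 2\sqrt{r/\lg N}$ (which keeps $\tilde H > 2$) holds on the remaining intervals; your claim that $|I_j| \leq N^{1/4r}$ whenever $X_j < 1$ requires $\epsilon \leq 1/(16r)$ and so fails for fixed $\epsilon$ and large $r$.
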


Note that whenever we write $\epsilon$ in a complexity bound,
we mean that the bound holds for all $\epsilon > 0$,
where the implied big-$O$ constant may depend on $\epsilon$.

The integers $p$ referred to in Theorem \ref{thm:main} need not be prime.
Of course, if $p$ is a composite integer found by the algorithm,
then the algorithm will incidentally determine the
complete factorisation of $p$,
as the prime divisors $\ell$ of $p$ must also satisfy $\ell^r \divides N$.

The hypothesis $r \leq \lg N$ does not really limit
the applicability of the theorem: if $r > \lg N$ then the problem is trivial,
as the only possible $r$-power divisor is $1$.

Theorem \ref{thm:main} is intended primarily as a theoretical result.
For fixed $r$ the complexity is $O(N^{1/4r+\epsilon})$,
which is fully exponential in $\lg N$,
so the algorithm cannot compete asymptotically with subexponential factoring
algorithms such as the elliptic curve method (ECM)
or the number field sieve (NFS).
Furthermore, experiments confirm that for small $r$ the algorithm is
grossly impractical compared to general-purpose factoring routines implemented
in modern computer algebra systems.

\subsection{Previous work}

At the core of our algorithm is a generalisation of
Coppersmith's method \cite{Cop-lowexponent}
introduced by Boneh, Durfee and Howgrave-Graham \cite{BDH-prq}.
We refer to the latter as the \emph{BDHG algorithm}.
Coppersmith's seminal work showed how to use lattice methods to quickly
find all divisors of $N$ in certain surprisingly large intervals.
To completely factor $N$,
one simply applies the method to a sequence of intervals that covers
all possible divisors up to $N^{1/2}$.
Each interval is searched in polynomial time,
so the overall complexity is governed by the number of such intervals,
which turns out to be $O(N^{1/4+\epsilon})$.
The BDHG algorithm adapts Coppersmith's method
to the case of $r$-power divisors.
The relationship between our algorithm and the BDHG algorithm is
discussed in Section \ref{sec:BDHG} below.

We emphasise that, unlike factoring algorithms such as ECM or NFS,
whose favourable running time analyses depend on heuristic assumptions,
the complexity bound in Theorem \ref{thm:main} is \emph{rigorously analysed}
and fully \emph{deterministic}.
Under these restrictions, for $r \geq 2$ it is asymptotically superior
to all previously known complexity bounds
for the problem of finding $r$-power divisors.

Its closest competitors are the algorithms of Strassen \cite{Str-factoring}
and Pollard \cite{Pol-factorization}.
These algorithms can be used to find all divisors of $N$ less than a
given bound $B$ in time $O(B^{1/2+\epsilon})$.
If $p^r \divides N$, say $N = p^r q$,
then either $p \leq N^{1/(r+1)}$ or $q \leq N^{1/(r+1)}$,
so the Pollard--Strassen method can be used to find $p$ or $q$,
and hence both, in time $O(N^{1/2(r+1)+\epsilon})$.
For example, taking $r = 2$, these algorithms can find all \emph{square}
divisors of $N$ in time $O(N^{1/6+\epsilon})$,
whereas our algorithm finds all square divisors in time $O(N^{1/8+\epsilon})$.

There is one special case in which the Pollard--Strassen approach still wins.
If one knows in advance that $p$ is relatively small,
say $p < N^c$ for some $c \in (0, 1/2r)$,
then the Pollard--Strassen method has complexity $O(N^{c/2+\epsilon})$,
which is better than the bound in Theorem \ref{thm:main}.
Our algorithm can also take advantage of the information that $p < N^c$,
but unfortunately this yields only a constant-factor speedup.

Another point of difference is the space complexity.
The space required by the algorithm in Theorem \ref{thm:main} is
only polynomial in $\lg N$ (we will not give the details of this analysis),
whereas for the Pollard--Strassen method the space complexity is
the same as the time complexity, up to logarithmic factors.

In connection with the case $r = 2$,
two other works are worth mentioning.
Booker, Hiary and Keating \cite{BHK-squarefree} describe a
subexponential time algorithm that can sometimes prove that
a given integer $N$ is squarefree,
with little or no knowledge of its factorisation.
This algorithm is not fully rigorous,
as its analysis depends on (among other things)
the Generalised Riemann Hypothesis.
Peralta and Okamoto \cite{PO-ecm} present a speedup of the ECM method
for integers of the form $N = p^2 q$.
Again this result is not fully rigorous,
because it depends on standard conjectures concerning the distribution of
smooth numbers in short intervals,
just as in Lenstra's original ECM algorithm.

The case $r = 1$ corresponds to the ordinary factoring problem,
and in this case our algorithm is essentially
equivalent to Coppersmith's method.
As mentioned above, the complexity is $O(N^{1/4+\epsilon})$,
which does not improve on known results;
currently, the fastest known deterministic factoring method
has complexity $O(N^{1/5+\epsilon})$ \cite{HH-onefifthloglog}.
(It is interesting to ask whether the ideas behind \cite{HH-onefifthloglog}
can be used to improve Theorem \ref{thm:main} when $r \geq 2$.
Our inquiries in this direction have been so far unsuccessful.)

In fact, when $r = 1$, Theorem \ref{thm:main} gives the more precise
complexity bound $O(N^{1/4} (\lg N)^{10+\epsilon})$.
It is apparently well known that Coppersmith's method has complexity
$O(N^{1/4} (\lg N)^C)$ for some constant $C > 0$,
but to the best of our knowledge,
this is the first time in the literature that a particular value of
$C$ has been specified.
On the other hand,
we have not tried particularly hard to optimise the value of $C$,
and it is likely that it can be improved.
(One possible improvement is outlined in Remark \ref{rem:gaussian}.)

\subsection{Relationship to the BDHG algorithm}
\label{sec:BDHG}

The authors of \cite{BDH-prq} were mainly interested in
cryptographic applications,
and this led them to focus on the case that $N = p^r q$
where $p$ and $q$ are roughly the same size.
In this setting, they show that their algorithm is faster than ECM
when $r \approx (\log p)^{1/2}$,
and that it even runs in polynomial time when $r$ is as large as $\log p$.

In this paper we take a different point of view:
our goal is to determine the worst-case complexity,
without any assumptions on the size of $p$, $q$ or $r$.

To illustrate what difference this makes,
consider again the case $r = 2$.
This case is mentioned briefly in Section 6 of \cite{BDH-prq}.
The authors point out that if $N = p^2 q$,
where $p$ and $q$ are known to be about the same size,
i.e., both $p$ and $q$ are within a constant factor of $N^{1/3}$,
then the running time of their method is $O(N^{1/9+\epsilon})$,
i.e., the number of search intervals is $O(N^{1/9+\epsilon})$.
However, in our more general setup, this is \emph{not} the worst case.
Rather, the worst case running time is $O(N^{1/8+\epsilon})$,
which occurs when searching for
$p \sim N^{1/4}$ and $q \sim N^{1/2}$.

More generally,
for $r \geq 1$ the worst case running time of $O(N^{1/4r+\epsilon})$
stated in Theorem \ref{thm:main}
occurs when $p \sim N^{1/2r}$ and $q \sim N^{1/2}$.
By contrast, in the ``balanced'' situation considered in \cite{BDH-prq},
where $p, q \sim N^{1/(r+1)}$,
one can show that the running time is only $O(N^{1/(r+1)^2+\epsilon})$
(see Remark \ref{rem:worst-case}, and take $\theta = r/(r+1)$).

Although the core of our algorithm is essentially the same as
the BDHG algorithm,
our more general perspective requires us to make
a few changes to their presentation.
For instance, we cannot take the lattice dimension to be $d \approx r^2$
(as is done in the main theorem of \cite{BDH-prq}),
because this choice is suboptimal when $r$ is small and fixed.
Additional analysis is required to deal with potentially small values
of $p$ and $q$,
and in general we must take more care than \cite{BDH-prq}
in estimating certain quantities throughout the argument.
For these reasons, we decided to give a self-contained presentation,
not relying on the results in \cite{BDH-prq}.

\subsection{Root-finding}

An important component of our algorithm,
and of all algorithms pursuing Coppersmith's strategy,
is a subroutine for finding all integer roots
of a polynomial with integer coefficients.
This problem has received extensive attention in the literature,
but we were unable to locate a clear statement
of a deterministic complexity bound suitable for our purposes.
For completeness, in Appendix~\ref{sec:root-finding} we give a detailed proof
of the following result.
For a polynomial $f \in \ZZ[x]$,
we write $\supnorm{f}$ for the maximum of the absolute values
of the coefficients of $f$.
\begin{thm}
   \label{thm:root-finding}
   Let $b \geq n \geq 1$ be integers.
   Given as input a polynomial $f \in \ZZ[x]$ of degree $n$
   such that $\supnorm{f} \leq 2^b$,
   we may find all of the integer roots of $f$ in time
   \[
      O(n^{2+\epsilon} b^{1+\epsilon}).
   \]
\end{thm}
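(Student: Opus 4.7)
My plan is to combine deterministic $p$-adic root finding with Hensel lifting. After reducing $f$ to its squarefree part, I find all roots modulo a small prime $p$ by trial enumeration, lift each of them to sufficiently high $p$-adic precision via Newton iteration, and finally verify each integer candidate by a direct evaluation of $f$.

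Concretely, the algorithm has five steps. \emph{Step 1}: replace $f$ by $g \coloneqq f / \gcd(f, f')$, which is squarefree over $\mathbb{Q}$ and has the same integer roots as $f$. \emph{Step 2}: find the smallest prime $p$ such that $g \bmod p$ has degree $n$ and is separable over $\mathbb{F}_p$; equivalently, $p \ndivides \lc(g) \cdot \res(g, g')$. Since this integer has $O(nb)$ bits by Hadamard's inequality, Chebyshev's bound produces such a $p$ with $p = O((nb)^{1+\epsilon})$, and candidates are tested in order by a $\gcd$ computation in $\mathbb{F}_p[x]$. \emph{Step 3}: enumerate the (at most $n$) roots of $g \bmod p$ by evaluating $g \bmod p$ on each residue class. \emph{Step 4}: for each such root $r$, apply the Newton iteration $r_{k+1} \coloneqq r_k - g(r_k)/g'(r_k) \pmod{p^{2^k}}$ to obtain the unique Hensel lift $\tilde r$ modulo $p^K$, where $K$ is chosen so that $p^K$ exceeds twice the Cauchy bound on the integer roots of $f$; simplicity of $r$ modulo $p$ guarantees invertibility of $g'(r_k)$. \emph{Step 5}: for each $\tilde r$, take its representative in $(-p^K/2,\, p^K/2]$ and check whether $f$ vanishes there by direct evaluation.

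For the complexity, each of the five steps fits into $O(n^{2+\epsilon} b^{1+\epsilon})$ bit operations. Step 1 uses a fast algorithm for $\gcd$ in $\ZZ[x]$ based on reduction to several small primes and Chinese remaindering, to avoid the coefficient blow-up of Euclidean division over $\ZZ$. Step 2 examines at most $O((nb)^{1+\epsilon})$ candidate primes, each at polylogarithmic cost. Step 3 performs $p$ evaluations in $\mathbb{F}_p$ at cost $O(n (\lg p)^{1+\epsilon})$ each, for a total of $O(np (\lg p)^{1+\epsilon}) = O(n^{2+\epsilon} b^{1+\epsilon})$. In step 4, one Newton iteration at precision $p^\ell$ is dominated by a Horner evaluation of $g$ modulo $p^\ell$ at bit-cost $O(n (\ell \lg p)^{1+\epsilon})$; summing the geometric series over the $O(\lg b)$ doublings of $\ell$ gives $O(n b^{1+\epsilon})$ per root and $O(n^2 b^{1+\epsilon})$ over the at most $n$ roots. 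In step 5, each evaluation of $f$ at a $b$-bit integer is performed by binary splitting in $O((nb)^{1+\epsilon})$ bit operations, and with at most $n$ candidates the total is again $O(n^{2+\epsilon} b^{1+\epsilon})$.

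The main technical obstacle is the complexity bookkeeping in steps 1 and 2: one has to show that the squarefree part of $f$ can be produced within the target budget despite the well-known coefficient blow-up of the Euclidean algorithm over $\ZZ$, and one has to give a sharp enough bound (via Hadamard for $|\res(g,g')|$ and Chebyshev for prime counting) on the size of the smallest separating prime. A secondary point is that step 5 genuinely requires binary-splitting evaluation: a naive Horner pass would cost roughly $n^2 b^{1+\epsilon}$ per candidate and hence $n^3 b^{1+\epsilon}$ overall, violating the target bound.
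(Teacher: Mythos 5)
Your high-level strategy (squarefree reduction, choice of a separating prime by a resultant/Chebyshev argument, brute-force root search modulo $p$, Hensel lifting to a precision just above the Cauchy bound, final verification) is essentially the same blueprint the paper follows. Your Step 5 is a genuine and correct alternative: the paper instead divides $\bar f$ by $(x-\bar r)$ modulo $p^k$, lifts the quotient to $\ZZ[x]$, and checks $(x-r^*)g^*(x)=f(x)$, working only with $O(b)$-bit integers; your binary-splitting evaluation of $f(r^*)$ produces an $O(nb)$-bit result but still comes in at $O((nb)^{1+\epsilon})$ per candidate and $O(n^{2+\epsilon}b^{1+\epsilon})$ overall, so both fit the budget.

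There is, however, a real complexity gap in your Step~2. You assert that each of the up to $O(nb)$ primes is tested ``at polylogarithmic cost,'' but the test for a given $p$ requires (at minimum) reducing the $n{+}1$ coefficients of $g$, each of $O(b)$ bits, modulo $p$, which costs $\Theta(nb^{1+\epsilon})$ bit operations by itself, plus a $\gcd$ in $\mathbb{F}_p[x]$ at cost $O(n^{1+\epsilon}(\lg p)^{1+\epsilon})$. Since in the worst case one must scan all $\Theta(nb/\lg(nb))$ primes below the $\Theta(nb)$ bound before finding a separating one, the total is $\widetilde\Theta(n^2 b^2)$, which exceeds $O(n^{2+\epsilon}b^{1+\epsilon})$ whenever $b$ is substantially larger than $n$. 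The fix is to reduce $g$ modulo \emph{all} primes up to the bound simultaneously with a remainder tree: the product of those primes has $O(nb)$ bits, so the simultaneous reduction costs $O((nb)^{1+\epsilon})$ per coefficient and $O(n^{2+\epsilon}b^{1+\epsilon})$ in total, after which the per-prime $\gcd$s stay within budget.

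Your Step~1 also glosses over what is, in the paper, the most technically involved part of the appendix. ``Reduction to several small primes and Chinese remaindering'' for $\gcd$ in $\ZZ[x]$ is, as usually presented, a \emph{randomised} algorithm: one has to cope with unlucky primes (those dividing $\lc(f)$ or $\res(f/h,\,f'/h)$), choose an a priori bound on how far to search for lucky primes, normalise leading coefficients consistently before CRT, and certify the reconstructed candidate. None of these points is addressed in your sketch, and it is exactly this set of difficulties that the paper avoids by instead giving a \emph{deterministic} variant of the heuristic GCD algorithm: evaluate $f$ and $f'$ at a single integer $N=2^c$ with $c=\Theta(nb)$ chosen large enough to dominate the worst-case extraneous factor $\delta(N)\mid\res(\tilde f,\tilde g)$, take one integer $\gcd$, and read off the coefficients of $h$, $\tilde f$, $\tilde g$ from the base-$N$ digits. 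A deterministic modular-CRT variant can probably be made to work within the same budget, but it needs to be spelled out; as written it is not a proof.
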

Note that this complexity bound is much stronger than what is needed for
the application in this paper.
However, it is still not quasilinear in the size of the input,
which is $O(nb)$.
For further discussion,
see Remarks \ref{rem:quasilinear1} and \ref{rem:quasilinear2}.

\subsection*{Acknowledgments}

The authors would like to thank Joris van der Hoeven for helpful discussions
on the root finding problem.

\section{Searching one interval}

In this section we recall the strategy of \cite{BDH-prq}
for finding all integers $p$ in a prescribed interval
$P - H \leq p \leq P + H$ such that $p^r \divides N$,
provided that $H$ is not too large.
We will prove the following theorem.
\begin{thm}
   \label{thm:one-interval}
   There is an explicit deterministic algorithm with the following properties.
   It takes as input positive integers $N$, $r$, $m$, $d$, $P$ and $H$
   such that
   \begin{equation}
      \label{eq:r-bound}
      r \leq \lg N,
   \end{equation}
   \begin{equation}
      \label{eq:m-bound}
      m \leq d/r,
   \end{equation}
   \begin{equation}
      \label{eq:P-bound}
      H < P \leq N^{1/r},
   \end{equation}
   and
   \begin{equation}
      \label{eq:H-bound}
      H^{(d-1)/2} <
         \frac{1}{d^{1/2} \, 2^{(d-1)/4}} \cdot \frac{(P-H)^{rm}}{N^{rm(m+1)/2d}}.
   \end{equation}
   Its output is a list of all integers $p$ in the interval
   $P - H \leq p \leq P + H$ such that $p^r \divides N$.
   Its running time is
   \[
      O\big( d^{7+\epsilon} (\tfrac 1r \lg N)^{2+\epsilon} \big).
   \]
\end{thm}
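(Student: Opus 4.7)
The plan is to execute the Coppersmith--Boneh--Durfee--Howgrave-Graham lattice strategy for the search interval. Any solution $p \in [P-H, P+H]$ has the form $p = P + x_0$ with $|x_0| \leq H$, and I will build a $d$-dimensional integer lattice $L$ whose vectors correspond to polynomials vanishing modulo $p^{rm}$ at $x_0$. Running LLL on $L$ will produce a short lattice vector whose associated polynomial $h \in \ZZ[x]$ actually satisfies $h(x_0) = 0$ over $\ZZ$, after which Theorem \ref{thm:root-finding} recovers $x_0$ and hence $p$.

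For the basis I would take the $d$ polynomials
\[
   g_{i,j}(x) \coloneqq N^{m-i} x^j (x+P)^{ri} \qquad (0 \leq i < m,\ 0 \leq j < r)
\]
together with
\[
   h_\ell(x) \coloneqq x^\ell (x+P)^{rm} \qquad (0 \leq \ell < d - rm),
\]
which have distinct degrees $0, 1, \ldots, d-1$; hypothesis \eqref{eq:m-bound} guarantees $d - rm \geq 0$. Since $p^{rm} \divides N^{m-i} p^{ri}$ and $(x_0+P)^{ri} = p^{ri}$, every $g_{i,j}(x_0)$ is divisible by $p^{rm}$, and similarly $h_\ell(x_0) = x_0^\ell p^{rm}$ is divisible by $p^{rm}$. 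Taking the rows of the basis matrix of $L$ to be the coefficient vectors of $g_{i,j}(Hy)$ and $h_\ell(Hy)$ in the monomial basis $1, y, \ldots, y^{d-1}$ yields a triangular matrix whose determinant is the product of leading coefficients,
\[
   \det L = H^{d(d-1)/2} \, N^{rm(m+1)/2}.
\]

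Next, LLL produces a nonzero $v \in L$ with $\tnorm{v}_2 \leq 2^{(d-1)/4} (\det L)^{1/d}$. Writing $v$ as the coefficient vector of $h(Hy)$ for some nonzero $h \in \ZZ[x]$ of degree at most $d-1$, we still have $p^{rm} \divides h(x_0)$, and Cauchy--Schwarz combined with $|x_0| \leq H$ gives
\[
   |h(x_0)| \leq \sum_{k=0}^{d-1} |h_k| H^k \leq d^{1/2} \tnorm{v}_2
      \leq d^{1/2} \, 2^{(d-1)/4} \, H^{(d-1)/2} \, N^{rm(m+1)/(2d)},
\]
which by hypothesis \eqref{eq:H-bound} is strictly less than $(P-H)^{rm} \leq p^{rm}$. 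This forces $h(x_0) = 0$ in $\ZZ$. I then apply Theorem \ref{thm:root-finding} to $h$ and retain those integer roots $x_0 \in [-H, H]$ for which $(P+x_0)^r$ actually divides $N$.

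For the complexity, the hypotheses $P, H \leq N^{1/r}$ and $m \leq d/r$ show that every entry of the basis matrix has absolute value at most $2^{O(d)} N^{O(d/r)}$, so bit-size $B = O((d/r) \lg N)$ once $r \leq \lg N$ is used to absorb the additive $O(d)$. Applying the standard deterministic LLL complexity bound $O(d^{5+\epsilon} B^{2+\epsilon})$ yields LLL cost $O(d^{7+\epsilon} ((1/r) \lg N)^{2+\epsilon})$, matching the target. The bound $\supnorm{h} \leq \tnorm{v}_2 = 2^{O(B)}$ makes root-finding via Theorem \ref{thm:root-finding} cost only $O(d^{3+\epsilon} ((1/r) \lg N)^{1+\epsilon})$, and the final divisibility checks are negligible. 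I expect the main obstacle to be the careful bookkeeping on the bit sizes throughout: verifying that no coefficient appearing anywhere in the construction or in the LLL output exceeds $2^{O(B)}$, so that additive $O(d)$ contributions really are absorbed and LLL remains the dominant cost.
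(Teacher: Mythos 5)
Your proposal is correct and follows essentially the same Coppersmith/BDHG lattice strategy as the paper: build a $d$-dimensional lattice of polynomials divisible by $p^{rm}$ at $x_0$, LLL-reduce, apply Cauchy--Schwarz against the hypothesis \eqref{eq:H-bound} to force $h(x_0)=0$, then extract integer roots via Theorem~\ref{thm:root-finding}. The only substantive difference is the choice of lattice basis: you use the shifted polynomials $N^{m-i} x^j (x+P)^{ri}$ and $x^\ell(x+P)^{rm}$ (the form from the original BDHG paper), whereas the paper uses $N^{m-\lfloor i/r\rfloor}(P+x)^i$ and $(P+x)^i$. Both bases are lower-triangular in the monomial basis after the substitution $x\mapsto Hy$, have the same diagonal entries, hence the same determinant $H^{d(d-1)/2}N^{rm(m+1)/2}$, and satisfy the same coefficient bound $2^{O(d)}N^{d/r+1}$ once one uses $H<P\leq N^{1/r}$ to control $H^kP^{\deg-k}$ and $N^{m-i}P^{ri}\leq N^m$; so the LLL bit-size and the final complexity come out identically. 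In short, a cosmetic change of basis, not a different argument.
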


A key tool needed in the proof of Theorem \ref{thm:one-interval}
is the LLL algorithm:
\begin{lem}
   \label{lem:LLL}
   Let $d \geq 1$ and $B \geq 2$.
   Given as input linearly independent vectors $v_0, \ldots, v_{d-1} \in \ZZ^d$
   such that $\norm{v_i} \leq B$,
   in time
   \[
      O\big( d^{5+\epsilon} (\lg B)^{2+\epsilon} \big)
   \]
   we may find a nonzero vector $w$
   in the lattice $L \coloneqq \Span_\ZZ(v_0, \ldots, v_{d-1})$ such that
   \[
      \norm{w} \leq 2^{(d-1)/4} (\det L)^{1/d}.
   \]
   (Here $\norm{\,\cdot\,}$ denotes the standard Euclidean norm on $\RR^d$.)
\end{lem}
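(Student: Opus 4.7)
The plan is to invoke a standard LLL routine and return the first vector of the output basis. By the classical guarantee for LLL-reduced bases, if $b_0, \ldots, b_{d-1}$ is an LLL-reduced basis of $L$ (with the usual reduction parameter, say $3/4$), then $\norm{b_0} \leq 2^{(d-1)/4} (\det L)^{1/d}$. So setting $w = b_0$ satisfies the required norm bound, and the substantive content of the lemma is really the running time.

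For the complexity, I would cite the original LLL algorithm of Lenstra, Lenstra and Lov\'asz, analysed with fast (quasi-linear time) integer arithmetic. The standard analysis bounds the number of basis-update steps by $O(d^2 \lg B)$, and each such step operates on rational numbers (e.g.\ entries of the Gram matrix or the associated Gram--Schmidt data) of bit-length $O(d \lg B)$. With the quasi-linear multiplication, division and GCD routines assumed throughout the paper, each step costs $O((d \lg B)^{1+\epsilon})$ bit operations, giving the claimed total $O(d^{5+\epsilon} (\lg B)^{2+\epsilon})$. Alternatively, one could quote the $L^2$ algorithm of Nguyen and Stehl\'e or one of its successors; these deliver strictly stronger bounds, but the weaker statement here is all that is needed downstream.

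The main obstacle is simply to match a published complexity statement to the exact form of the bound claimed. The bound is deliberately loose, with the $+\epsilon$ exponents absorbing the polylogarithmic overhead from Sch\"onhage--Strassen-type integer arithmetic and the $\lg \lg$ factors in fast GCD, so no delicate optimisation is required: the work reduces to careful citation, a check that the first vector of any LLL-reduced basis meets the stated inequality, and a verification that the running time stated in the reference, expressed in bit operations with fast arithmetic, fits within the budget $O(d^{5+\epsilon}(\lg B)^{2+\epsilon})$.
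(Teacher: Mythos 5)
Your approach is essentially the same as the paper's: the paper also takes $w$ to be the first vector of an LLL-reduced basis (citing \cite[Prop.~1.26]{LLL-factoring} for the algorithm and \cite[Prop.~1.6]{LLL-factoring} for the bound $\norm{b_0} \leq 2^{(d-1)/4}(\det L)^{1/d}$), and the complexity bound is obtained by combining the classical $O(d^4 \lg B)$ arithmetic-operation count with quasilinear arithmetic on integers of bit-length $O(d \lg B)$.

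One small bookkeeping slip in your complexity accounting: you quote $O(d^2 \lg B)$ basis-update steps each costing $O((d \lg B)^{1+\epsilon})$ bit operations, but that product is $O(d^{3+\epsilon}(\lg B)^{2+\epsilon})$, not the $O(d^{5+\epsilon}(\lg B)^{2+\epsilon})$ you then claim. The missing factor of $d^2$ is because each swap/size-reduction step actually performs $O(d^2)$ arithmetic operations (updating a full row or the Gram--Schmidt data), so the total arithmetic-operation count is $O(d^4 \lg B)$, which then gives $O(d^4 \lg B \cdot (d \lg B)^{1+\epsilon}) = O(d^{5+\epsilon}(\lg B)^{2+\epsilon})$. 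Since the final bound you state is correct and matches the reference, this doesn't invalidate anything, but the intermediate tally as written is inconsistent.
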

\begin{proof}
   We take $w$ to be the first vector in a reduced basis for $L$
   computed by the LLL algorithm \cite[Prop.~1.26]{LLL-factoring}.
   For the bound on $\norm{w}$, see \cite[Prop.~1.6]{LLL-factoring}.
   (For more recent developments on lattice reduction,
   see for example \cite[Ch.~17]{Gal-cryptomath}.)
\end{proof}

Let $\ZZ[x]_d$ denote the space of polynomials in $\ZZ[x]$
of degree less than $d$.
The first step in the proof of Theorem \ref{thm:one-interval}
is the following proposition,
which uses the LLL algorithm to construct a nonzero polynomial $h \in \ZZ[x]_d$
with relatively small coefficients in a carefully chosen lattice.
\begin{prop}
   \label{prop:one-interval}
   Let $N$, $r$, $m$, $d$, $P$ and $H$ be positive integers
   satisfying \eqref{eq:r-bound}, \eqref{eq:m-bound} and \eqref{eq:P-bound}.
   Define polynomials $f_0, \ldots, f_{d-1} \in \ZZ[x]_d$ by
   \[
      f_i(x) \coloneqq
      \begin{cases}
         N^{m - \lfloor i/r \rfloor} (P + x)^i,   &  0 \leq i < rm, \\
         (P + x)^i,                               & rm \leq i < d.
      \end{cases}
   \]
   Then in time
   \begin{equation}
      \label{eq:find-h-bound}
      O\big( d^{7+\epsilon} (\tfrac1r \lg N)^{2+\epsilon} \big)
   \end{equation}
   we may find a nonzero polynomial
   \[
      h(x) = h_0 + \cdots + h_{d-1} x^{d-1} \in \ZZ[x]_d
   \]
   in the $\ZZ$-span of $f_0, \ldots, f_{d-1}$ such that
   \begin{equation}
      \label{eq:h-sum-bound}
      |h_0| + |h_1| H + \cdots + |h_{d-1}| H^{d-1} <
         d^{1/2} \, 2^{(d-1)/4} H^{(d-1)/2} N^{rm(m+1)/2d}.
   \end{equation}
\end{prop}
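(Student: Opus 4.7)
The plan is to realise the $\ZZ$-span of $f_0, \ldots, f_{d-1}$ as a lattice in $\ZZ^d$ via a weighted coordinate system, so that the weighted sum on the left of \eqref{eq:h-sum-bound} becomes the $\ell^1$-norm of a lattice vector. I then apply Lemma~\ref{lem:LLL} to extract a short Euclidean vector in that lattice, and finally convert the $\ell^2$-bound to the desired $\ell^1$-bound via Cauchy--Schwarz.

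Concretely, define the $\ZZ$-linear isomorphism $\phi : \ZZ[x]_d \to \ZZ^d$ sending $\sum_j h_j x^j$ to $(h_0, h_1 H, \ldots, h_{d-1} H^{d-1})$, set $v_i \coloneqq \phi(f_i)$ and $L \coloneqq \Span_\ZZ(v_0,\ldots,v_{d-1})$. Each $f_i$ has degree exactly $i$ with leading coefficient $c_i$ equal to $N^{m-\lfloor i/r\rfloor}$ for $i < rm$ and to $1$ for $rm \leq i < d$, so the matrix with columns $v_0,\ldots,v_{d-1}$ is upper triangular with diagonal entries $c_i H^i$; in particular the $v_i$ are linearly independent, and
\[
\det L = \prod_{i=0}^{d-1} c_i H^i = H^{d(d-1)/2} N^{rm(m+1)/2},
\]
where the exponent of $N$ uses $\sum_{i=0}^{rm-1}(m - \lfloor i/r\rfloor) = rm^2 - r \cdot \tfrac{m(m-1)}{2} = \tfrac{rm(m+1)}{2}$ (since $\lfloor i/r\rfloor$ takes each value in $\{0,\ldots,m-1\}$ exactly $r$ times as $i$ ranges over $\{0,\ldots,rm-1\}$).

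To invoke Lemma~\ref{lem:LLL} I bound $\norm{v_i}$: each entry of $v_i$ is at most $N^m \binom{i}{j} P^{i-j} H^j \leq 2^d N^{2d/r}$ in absolute value, using $H \leq P \leq N^{1/r}$, $\binom{i}{j} \leq 2^d$ and $m \leq d/r$. Hence $\norm{v_i} \leq d^{1/2} 2^d N^{2d/r}$, and $\lg B = O((d/r)\lg N)$ (absorbing $d$ into $(d/r)\lg N$ via $r \leq \lg N$). Lemma~\ref{lem:LLL} then runs in time $O(d^{5+\epsilon}((d/r)\lg N)^{2+\epsilon})$, matching \eqref{eq:find-h-bound}, and returns a nonzero $w \in L$ with $\norm{w} \leq 2^{(d-1)/4}(\det L)^{1/d} = 2^{(d-1)/4} H^{(d-1)/2} N^{rm(m+1)/2d}$. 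Setting $h \coloneqq \phi^{-1}(w)$, Cauchy--Schwarz gives $|h_0| + |h_1|H + \cdots + |h_{d-1}|H^{d-1} \leq \sqrt{d}\,\norm{w}$, which yields \eqref{eq:h-sum-bound}. The main obstacle is purely bookkeeping — correctly tracking the exponents in the determinant formula and the entry-size estimates — together with the mild point that strict inequality in \eqref{eq:h-sum-bound} requires a small extra observation (for instance, equality in Cauchy--Schwarz would force every $|w_j|$ equal to a common integer that, by the weighting $w_j \in H^j \ZZ$, must be divisible by $H^{d-1}$, which is incompatible with the LLL bound for a nonzero $w$).
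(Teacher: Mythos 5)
Your argument follows exactly the same route as the paper: realise the $\ZZ$-span of the $f_i$ as a lattice via the $H$-weighted coordinates (the paper packages this as the substitution $y \mapsto Hy$, which is the same map as your $\phi$), compute $\det L = H^{d(d-1)/2}N^{rm(m+1)/2}$ from the triangular shape, invoke Lemma~\ref{lem:LLL}, and convert the $\ell^2$ bound to an $\ell^1$ bound by Cauchy--Schwarz; your coarser entry bound $2^d N^{2d/r}$ (versus the paper's $2^d N^{d/r+1}$) still gives $\lg B = O(\tfrac dr \lg N)$, so the complexity \eqref{eq:find-h-bound} is unaffected, and your one omission --- the time to compute the $v_i$ themselves --- is minor and easily dominated by the LLL cost. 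One remark on your strictness worry: the paper's own proof silently writes $<$ after invoking Lemma~\ref{lem:LLL}, which only supplies $\leq$, so there is indeed a small slip there, and your equality-case-of-Cauchy--Schwarz patch is a natural instinct but does not actually close it (it breaks down when $H=1$, where the weighting imposes no constraint); fortunately the strictness is immaterial, since the inequality \eqref{eq:H-bound} used downstream in Proposition~\ref{prop:divisor-root} is itself strict, so \eqref{eq:h-sum-bound} with $\leq$ in place of $<$ would serve just as well.
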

\begin{proof}
   Set $\tilde f_i(y) \coloneqq f_i(Hy) \in \ZZ[y]_d$,
   and let $v_i \in \ZZ^d$ be the vector whose $j$-th entry
   (for $j = 0, \ldots, d-1$)
   is the coefficient of $y^j$ in $\tilde f_i(y)$.
   We will apply Lemma \ref{lem:LLL} to the vectors $v_0, \ldots, v_{d-1}$.

   Let
   \[
      B \coloneqq d^{1/2} \, 2^d N^{d/r+1}.
   \]
   We claim that $\norm{v_i} \leq B$ for all $i$.
   First consider the case $0 \leq i < rm$.
   For any $j = 0, \ldots, i$, the coefficient of $y^j$ in
   $\tilde f_i(y) = N^{m-\lfloor i/r \rfloor} (P + Hy)^i$
   is equal to
   \[
      N^{m-\lfloor i/r \rfloor} \tbinom{i}{j} P^{i-j} H^j
         \leq N^{m - i/r + 1} 2^i P^i
         \leq 2^i N^{m+1}
         \leq 2^d N^{d/r+1},
   \]
   where we have used the hypotheses \eqref{eq:P-bound} and \eqref{eq:m-bound}.
   For the case $rm \leq i < d$,
   a similar argument shows that every coefficient
   of $\tilde f_i(y) = (P + Hy)^i$ is bounded above by $2^d N^{d/r}$.
   Therefore every $v_i$ has coordinates bounded by $2^d N^{d/r+1}$,
   and we conclude that $\norm{v_i} \leq B$ for all $i$.

   Next we calculate the determinant of the lattice
   $L \coloneqq \Span_\ZZ(v_0, \ldots, v_{d-1})$,
   or equivalently, the determinant of the $d \times d$ integer matrix
   whose rows are given by the $v_i$.
   Since $\deg \tilde f_i(y) = i$, this is a lower triangular matrix
   whose diagonal entries are given by the leading coefficients
   of the $\tilde f_i(y)$, namely
   \[
      \begin{cases}
         N^{m - \lfloor i/r \rfloor} H^i,   &  0 \leq i < rm, \\
         H^i,                               & rm \leq i < d.
      \end{cases}
   \]
   The determinant is the product of these leading coefficients, i.e.,
   \begin{align*}
      \det L & = H^{1 + 2 + \cdots + (d-1)}
            \underbrace{(N^m \cdots N^m)}_{\text{$r$ terms}}
            \underbrace{(N^{m-1} \cdots N^{m-1})}_{\text{$r$ terms}} \cdots
            \underbrace{(N \cdots N)}_{\text{$r$ terms}} \\
             & = H^{1 + 2 + \cdots + (d-1)} (N^{1 + 2 + \cdots + m})^r \\
             & = H^{d(d-1)/2} N^{rm(m+1)/2}.
   \end{align*}

   Invoking Lemma \ref{lem:LLL}, we may compute a nonzero vector $w \in L$
   such that
   \[
      \norm{w} \leq 2^{(d-1)/4} H^{(d-1)/2} N^{rm(m+1)/2d}
   \]
   in time $O(d^{5+\epsilon} (\lg B)^{2+\epsilon})$.
   Note that this time bound certainly dominates the cost of computing
   the vectors $v_i$ themselves,
   as the $\tilde f_i(y)$ may be computed by starting with
   $\tilde f_0(y) = N^m$,
   and then successively multiplying by $P + Hy$
   and occasionally dividing by $N$.
   The hypotheses \eqref{eq:r-bound} and \eqref{eq:m-bound} imply that
   \[
      \lg B \ll d + (\tfrac{d}{r} + 1) \lg N
      = \big(\tfrac{r}{\lg N} + 1 + \tfrac{r}{d} \big) \tfrac{d}{r} \lg N
      \leq \big(2 + \tfrac1m \big) \tfrac{d}{r} \lg N \ll \tfrac{d}{r} \lg N,
   \]
   so the cost estimate $O(d^{5+\epsilon} (\lg B)^{2+\epsilon})$
   simplifies to \eqref{eq:find-h-bound}.

   The vector $w$ corresponds to a nonzero polynomial
   $\tilde h(y) = \tilde h_0 + \cdots + \tilde h_{d-1} y^{d-1}$
   in the $\ZZ$-span of the $\tilde f_i(y)$.
   Applying the Cauchy--Schwartz inequality to the vectors
   $w = (\tilde h_0, \ldots, \tilde h_{d-1})$ and $(1, \ldots, 1)$ yields
   \[
      |\tilde h_0| + \cdots + |\tilde h_{d-1}|
         \leq d^{1/2} \norm{w} < d^{1/2} \, 2^{(d-1)/4} H^{(d-1)/2} N^{rm(m+1)/2d}.
   \]
   Moreover, each $\tilde h_j$ is divisible by $H^j$,
   so we obtain in turn a polynomial
   $h(x) \coloneqq \tilde h(x/H) \in \ZZ[x]_d$
   in the $\ZZ$-span of the $f_i(x)$.
   Since $h(x) = h_0 + \cdots + h_{d-1} x^{d-1}$
   with $h_j = \tilde h_j / H^j$ for each $j$,
   the estimate \eqref{eq:h-sum-bound} follows immediately.
\end{proof}


Next we show that any $r$-power divisor that is sufficiently close to $P$
corresponds to a root of $h(x)$.
\begin{prop}
   \label{prop:divisor-root}
   Let $N$, $r$, $m$, $d$, $P$ and $H$ be positive integers
   satisfying \eqref{eq:r-bound}, \eqref{eq:m-bound} and \eqref{eq:P-bound},
   and let $h(x) \in \ZZ[x]_d$ be as in Proposition \ref{prop:one-interval}.
   Suppose additionally that \eqref{eq:H-bound} holds,
   and that $p$ is an integer in the interval
   $P - H \leq p \leq P + H$ such that $p^r \divides N$.
   Then $x_0 \coloneqq p - P$ is a root of $h(x)$.
\end{prop}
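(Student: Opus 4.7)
The plan is to follow the standard Howgrave-Graham template: show that $h(x_0)$ is divisible by some large integer $M$, show that $|h(x_0)| < M$, and conclude $h(x_0) = 0$. The natural choice here is $M = p^{rm}$, exploiting the assumption $p^r \divides N$.

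First I would check that $p^{rm}$ divides $f_i(x_0)$ for every $i$. Substituting $x_0 = p - P$ gives $P + x_0 = p$, so $f_i(x_0) = N^{m - \lfloor i/r \rfloor} p^i$ for $0 \leq i < rm$ and $f_i(x_0) = p^i$ for $rm \leq i < d$. From $p^r \divides N$ we get $p^{r(m - \lfloor i/r \rfloor)} \divides N^{m - \lfloor i/r \rfloor}$, so $p^{rm - r\lfloor i/r \rfloor + i} \divides f_i(x_0)$; since $r \lfloor i/r \rfloor \leq i$, the exponent is at least $rm$. In the second range the claim is immediate. Because $h$ lies in the $\ZZ$-span of the $f_i$, this yields $p^{rm} \divides h(x_0)$.

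Next I would bound $|h(x_0)|$. Since $|x_0| \leq H$, the triangle inequality gives
\[
   |h(x_0)| \leq |h_0| + |h_1| H + \cdots + |h_{d-1}| H^{d-1}.
\]
The coefficient estimate \eqref{eq:h-sum-bound} from Proposition~\ref{prop:one-interval} bounds the right-hand side by $d^{1/2} \, 2^{(d-1)/4} H^{(d-1)/2} N^{rm(m+1)/2d}$, and substituting the hypothesis \eqref{eq:H-bound} collapses this to $(P - H)^{rm}$. Since $P - H \leq p$, this is at most $p^{rm}$, so $|h(x_0)| < p^{rm}$.

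Combining the two bounds, $p^{rm} \divides h(x_0)$ and $|h(x_0)| < p^{rm}$ force $h(x_0) = 0$. The only step with any real content is the divisibility check in the first paragraph: everything else is a mechanical splicing together of the two given inequalities. There is no genuine obstacle; the delicate part of the argument is not here but in Proposition~\ref{prop:one-interval}, where the coefficient bound \eqref{eq:h-sum-bound} was extracted from LLL.
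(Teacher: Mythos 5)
Your proof is correct and follows exactly the same route as the paper's: show $p^{rm} \divides h(x_0)$ via the term-by-term divisibility $p^{rm} \divides f_i(x_0)$, bound $|h(x_0)| < (P-H)^{rm} \leq p^{rm}$ by combining \eqref{eq:h-sum-bound} with \eqref{eq:H-bound}, and conclude $h(x_0) = 0$. Nothing differs in substance.
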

\begin{proof}
   We claim that $h(x_0)$ is divisible by $p^{rm}$.
   Since $h(x)$ is a $\ZZ$-linear combination of the $f_i(x)$
   (where $f_i(x)$ is defined as in Proposition \ref{prop:one-interval}),
   it is enough to prove that $p^{rm} \divides f_i(x_0)$ for all $i$.
   For the case $0 \leq i < rm$, we have
   $f_i(x_0) = N^{m - \lfloor i/r \rfloor} p^i$.
   Since $p^r \divides N$,
   we have $p^{r (m - \lfloor i/r \rfloor)} p^i \divides f_i(x_0)$,
   and this implies that $p^{rm} \divides f_i(x_0)$
   because $r \lfloor i/r \rfloor \leq i$.
   For the case $i \geq rm$ we have simply $f_i(x_0) = p^i$,
   which is certainly divisible by $p^{rm}$.

   On the other hand, the assumption $-H \leq x_0 \leq H$ together with
   \eqref{eq:h-sum-bound} and \eqref{eq:H-bound} implies that
   \[
      |h(x_0)| \leq |h_0| + \cdots + |h_{d-1}| H^{d-1}
      < (P-H)^{rm} \leq p^{rm}.
   \]
   Since $h(x_0)$ is divisible by $p^{rm}$, this forces $h(x_0) = 0$.
\end{proof}

We may now complete the proof of the main theorem of this section.
\begin{proof}[Proof of Theorem \ref{thm:one-interval}]
   We first invoke Proposition \ref{prop:one-interval},
   with inputs $N$, $r$, $m$, $d$, $P$ and $H$,
   to find a polynomial $h(x)$ satisfying \eqref{eq:h-sum-bound}.
   According to Proposition \ref{prop:divisor-root},
   we may then construct a list of candidates for $p$ by
   finding all integer roots of $h(x)$,
   which we do via Theorem \ref{thm:root-finding}.

   To estimate the complexity of the root-finding step,
   recall from the proof of Proposition \ref{prop:divisor-root}
   that $|h_0| + \cdots + |h_{d-1}| H^{d-1} < (P-H)^{rm}$,
   so certainly $|h_j| < (P-H)^{rm}$ for all $j$, and we obtain
   \[
      \supnorm{h} < (P-H)^{rm} < P^{rm} \leq N^m \leq N^{d/r}.
   \]
   Therefore in Theorem \ref{thm:root-finding} we may take $n \coloneqq d$ and
   $b \coloneqq \lceil \lg(N^{d/r}) \rceil = \lceil \tfrac{d}{r} \lg N \rceil$.
   Note that the hypothesis $b \geq n$ is satisfied due to \eqref{eq:r-bound}.
   The root-finding complexity is thus
   \[
      O(d^{2+\epsilon} (\tfrac dr \lg N)^{1+\epsilon})
         = O(d^{3+\epsilon} (\tfrac 1r \lg N)^{1+\epsilon}),
   \]
   which is negligble compared to the main bound \eqref{eq:find-h-bound}.
   Finally, we must check each candidate for $p$
   to ensure that $p^r \divides N$,
   which again requires negligble time.
\end{proof}

\section{Proof of the main theorem}
\label{sec:main-proof}

We now consider the problem of searching for all integers $p$
such that $p^r \divides N$ in an interval, say $T \leq p \leq T'$,
that is too large to be handled by a single application of
Theorem \ref{thm:one-interval}.
Given $N$, $r$, $T$ and $T'$,
our strategy will be to choose parameters $d$, $m$ and $H$,
and then apply Theorem \ref{thm:one-interval} to a sequence of
subintervals of the form $P - H \leq p \leq P + H$ that cover the
target interval $T \leq p \leq T'$.
The overall running time will depend mainly on the number of subintervals,
so our goal is to make $H$ as large as possible.
On the other hand, to ensure that the hypothesis \eqref{eq:H-bound}
of Theorem \ref{thm:one-interval} is satisfied,
we also require that $H < \tilde H$ where
\begin{equation}
   \label{eq:Htilde}
   \tilde H \coloneqq \frac{1}{d^{1/(d-1)} 2^{1/2}} \cdot
      \frac{T^{2rm/(d-1)}}{N^{rm(m+1)/d(d-1)}} > 0.
\end{equation}
The key issue is therefore to choose $d$ and $m$ to maximise $\tilde H$.
For large $d$ and $m$,
the magnitude of $\tilde H$ depends more or less on the ratio $m/d$;
in fact, one finds that $\tilde H$ is maximised when
$m/d \approx \lg T / \lg N$.
The following result gives a simple formula for $m$ (as a function of $d$)
that is close to the optimal choice,
and a corresponding explicit lower bound for $\tilde H$.
\begin{lem}
   \label{lem:Htilde}
   Let $N$, $r$, $d$ and $T$ be positive integers
   such that $d \geq 2$ and $T \leq N^{1/r}$.
   Let
   \begin{equation}
      \label{eq:m}
      m \coloneqq \left\lfloor \frac{(d-1) \lg T}{\lg N} \right\rfloor,
   \end{equation}
   and let $\tilde H$ be defined as in \eqref{eq:Htilde}.
   Then
   \[
      \tilde H > \tfrac13 N^{\theta^2/r \, - \, 1/(d-1)},
   \]
   where
   \begin{equation}
      \label{eq:theta}
      \theta \coloneqq \frac{r \lg T}{\lg N} \in [0,1]
      \qquad \text{(so that $T = N^{\theta/r}$)}.
   \end{equation}
\end{lem}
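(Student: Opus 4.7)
The plan is to turn everything into powers of $N$ via the substitution $T = N^{\theta/r}$, then isolate a clean algebraic inequality for the exponent and a short numerical fact for the constant. Since $T^{2rm/(d-1)} = N^{2\theta m/(d-1)}$, we have
\[
\tilde H = \frac{N^{E}}{d^{1/(d-1)} \sqrt{2}}, \qquad \text{where } E \coloneqq \frac{2\theta m}{d-1} - \frac{rm(m+1)}{d(d-1)}.
\]
It suffices to prove the pointwise inequality $E \geq \theta^2/r - 1/(d-1)$: an easy induction gives $d \leq 2^{d-1}$ for every $d \geq 2$, so $d^{1/(d-1)} \leq 2$ and $d^{1/(d-1)} \sqrt{2} \leq 2\sqrt{2} < 3$, which combined with the exponent bound yields
\[
\tilde H \geq \frac{N^{\theta^2/r - 1/(d-1)}}{d^{1/(d-1)}\sqrt{2}} > \tfrac{1}{3}\, N^{\theta^2/r - 1/(d-1)}.
\]

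To prove $E \geq \theta^2/r - 1/(d-1)$, I would introduce $u \coloneqq (d-1)\theta/r$ and $\delta \coloneqq u - m \in [0,1)$, so that $\theta = ru/(d-1)$ and $u = m + \delta$. Substituting into the quantity $E - \theta^2/r + 1/(d-1)$ and multiplying through by $d(d-1)^2$, then simplifying with the elementary identity $2um - u^2 = m^2 - \delta^2$ (immediate from $u = m+\delta$), the expression should collapse to
\[
\bigl[E - \theta^2/r + 1/(d-1)\bigr]\, d(d-1)^2 = d(d-1) - r\bigl[m(d-1-m) + d\delta^2\bigr].
\]
So the goal becomes $r\bigl[m(d-1-m) + d\delta^2\bigr] \leq d(d-1)$, equivalently (dividing by $rd$) $\delta^2 + m(d-1-m)/d \leq (d-1)/r$.

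By the hypothesis $T \leq N^{1/r}$, i.e., $\theta \leq 1$, the right-hand side is at least $u = m + \delta$, so it is enough to show $\delta^2 + m(d-1-m)/d \leq m + \delta$, which rearranges to $\delta(\delta - 1) \leq m(m+1)/d$. This is immediate: the left-hand side is non-positive because $\delta \in [0,1]$, and the right-hand side is non-negative because $m \geq 0$. The main obstacle is really just the algebraic bookkeeping producing the clean identity in the second paragraph; once that identity is in hand, no case analysis by the relative sizes of $m$, $r$, $d$ is needed, and the remainder is a one-line verification.
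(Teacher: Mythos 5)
Your proof is correct. It follows essentially the same path as the paper's: both introduce the fractional defect $\delta$ between $(d-1)\theta/r$ and $m$ (yours is $\delta=(d-1)\theta/r-m\in[0,1)$, the paper's is $\delta=\theta/r-m/(d-1)\in[0,1/(d-1))$, the same thing up to scaling by $d-1$), both land on the exponent inequality $E\geq\theta^2/r-1/(d-1)$, and both close with $\theta\leq 1$ and the crude bound $d^{1/(d-1)}\sqrt{2}<3$. The one genuine difference is in how you get there: the paper first replaces $\tfrac{rm(m+1)}{d(d-1)}$ by the larger $\tfrac{rm(m+1)}{(d-1)^2}$ (a lossy intermediate step) and then expands, while you keep the factor of $d$, clear denominators to obtain the exact identity $\bigl[E-\theta^2/r+1/(d-1)\bigr]d(d-1)^2=d(d-1)-r\bigl[m(d-1-m)+d\delta^2\bigr]$, and reduce the whole problem to the one-line inequality $\delta(\delta-1)\leq m(m+1)/d$. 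That reorganization is tidier and produces a slightly sharper exponent estimate, though it makes no difference to the stated lemma since the strictness of the final inequality comes from $d^{1/(d-1)}\sqrt{2}<3$ anyway, not from the exponent bound. The identity you assert does check out (it uses $2um-u^2=m^2-\delta^2$ and the simplification $dm^2-m(m+1)(d-1)=-m(d-1-m)$), as does the step passing from $(d-1)/r\geq u=m+\delta$ to the sufficient inequality; so the argument is complete.
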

\begin{proof}
   The definition of $m$ implies that
   $(d-1)\tfrac{\theta}{r} - 1 < m \leq (d-1) \tfrac{\theta}{r}$,
   so we may write
   \[
      \frac{m}{d-1} = \frac{\theta}{r} - \delta \qquad
      \text{for some } \delta \in [0, \tfrac{1}{d-1}).
   \]
   It is easy to check that
   $d^{1/(d-1)} 2^{1/2} < 3$ for all $d \geq 2$,
   so we find that
   \[
      \tilde H > \tfrac13 N^{2\theta m/(d-1) - rm(m+1)/d(d-1)}.
   \]
   Continuing to estimate the exponent in this inequality, we obtain
   \begin{align*}
      \frac{2\theta m}{d-1} - \frac{rm(m+1)}{d(d-1)}
         & > \frac{2\theta m}{d-1} - \frac{rm(m+1)}{(d-1)^2} \\
         & = \frac{2\theta m}{d-1} - \frac{rm^2}{(d-1)^2} - \frac{rm}{(d-1)^2} \\
         & = 2\theta(\tfrac{\theta}{r} - \delta) - r(\tfrac{\theta}{r} - \delta)^2 - \frac{r(\tfrac{\theta}{r} - \delta)}{d-1} \\
         & = \frac{\theta^2}{r} + r\delta \left(\frac{1}{d-1} - \delta \right) - \frac{\theta}{d-1} \\
         & \geq \frac{\theta^2}{r} - \frac{1}{d - 1},
   \end{align*}
   where the last line follows from the inequalities
   $0 \leq \delta < \tfrac{1}{d-1}$ and $0 \leq \theta \leq 1$.
\end{proof}

We may now estimate the time required to search a
given interval $T \leq p \leq T'$.
\begin{prop}
   \label{prop:big-interval}
   There is an explicit deterministic algorithm with the following properties.
   It takes as input positive integers $N$, $r$, $T$ and $T'$ such that
   \eqref{eq:r-bound} holds (i.e., $r \leq \lg N$) and such that
   \begin{equation}
      \label{eq:T-inequality}
      4^{\sqrt{(\lg N)/r}} \leq T < T' \leq N^{1/r}.
   \end{equation}
   Its output is a list of all integers $p$ in the interval
   $T \leq p \leq T'$ such that $p^r \divides N$.
   Its running time is
   \[
      O\left( \left(\frac{T'-T}{T} \cdot N^{\theta(1-\theta)/r} + 1\right)
               \frac{(\lg N)^{9+\epsilon}}{r^2} \right),
   \]
   where $\theta$ is defined as in \eqref{eq:theta}.
\end{prop}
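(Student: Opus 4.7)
The plan is to cover the interval $[T, T']$ with a sequence of overlapping subintervals $[P - H, P + H]$ and apply Theorem \ref{thm:one-interval} to each. Following the discussion preceding Lemma \ref{lem:Htilde}, I would fix $d := 2\lceil \lg N \rceil$, define $m$ by \eqref{eq:m}, and take $H$ to be the largest positive integer strictly less than $\tilde H$ (from \eqref{eq:Htilde}). The choice $d = \Theta(\lg N)$ serves two purposes: it keeps $d^{7+\epsilon} = O((\lg N)^{7+\epsilon})$, and it ensures $d - 1 \ge \lg N$, whence $N^{1/(d-1)} \le 2$.

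The crucial preliminary step is to verify $H \ge 1$, and more precisely that $H \gg N^{\theta^2/r}$. Lemma \ref{lem:Htilde} with the chosen $d$ yields $\tilde H > \tfrac{1}{6} N^{\theta^2/r}$, and the hypothesis $T \ge 4^{\sqrt{(\lg N)/r}}$ translates, via $\theta = r \lg T / \lg N$, into $\theta^2(\lg N)/r \ge 4$, hence $N^{\theta^2/r} \ge 16$. Thus $\tilde H > 8/3$, so $H \ge \tilde H - 1 \ge \tilde H/2 \ge \tfrac{1}{12} N^{\theta^2/r}$. The remaining hypotheses of Theorem \ref{thm:one-interval} are straightforward: \eqref{eq:r-bound} is assumed; \eqref{eq:m-bound} follows from \eqref{eq:m} together with $\lg T \le (\lg N)/r$; and since $\theta \le 1$ one has $H < \tilde H \le N^{\theta^2/r} \le N^{\theta/r} = T$, so $H < P$ for any subinterval centre $P \ge T$.

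Next I would take centres $P_i := T + (2i+1)H$ for $0 \le i \le \lceil (T'-T)/(2H) \rceil$, adjusting the last centre downward so that $P_i \le N^{1/r}$ (possible because $T' \le N^{1/r}$); the resulting subintervals cover $[T, T']$. The number of subintervals is $O(1 + (T'-T)/H)$, and each call to Theorem \ref{thm:one-interval} costs $O(d^{7+\epsilon}(\tfrac{1}{r} \lg N)^{2+\epsilon}) = O((\lg N)^{9+\epsilon}/r^2)$, using $d = \Theta(\lg N)$ and $r \ge 1$ to absorb the extra $\epsilon$ in the exponent of $r$. Combining with the bound $(T'-T)/H \ll (T'-T) N^{-\theta^2/r} = ((T'-T)/T) \cdot N^{\theta(1-\theta)/r}$ (which uses $T = N^{\theta/r}$) yields the claimed running time. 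Merging the per-subinterval outputs and filtering to retain those $p \in [T, T']$ with $p^r \divides N$ is negligible.

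The main obstacle, modest as it is, lies in confirming that the specific cutoff $4^{\sqrt{(\lg N)/r}}$ in \eqref{eq:T-inequality} is exactly what is needed to make the lower bound on $\tilde H$ comfortably exceed $1$ after paying the $N^{-1/(d-1)}$ penalty from Lemma \ref{lem:Htilde}. Once $d$ is committed to be of order $\lg N$, the condition $\theta^2(\lg N)/r \ge 4$ supplies precisely the required slack, and everything else reduces to direct substitution.
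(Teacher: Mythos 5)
Your proposal is correct and follows essentially the same strategy as the paper: take $d = \Theta(\lg N)$, define $m$ by \eqref{eq:m}, use Lemma~\ref{lem:Htilde} to get $\tilde H > \tfrac16 N^{\theta^2/r}$, set $H$ just below $\tilde H$, and cover $[T,T']$ with subintervals of half-width $H$ (the paper takes $d = \lceil\lg N\rceil + 1$ rather than $2\lceil\lg N\rceil$, but this is immaterial). Two small points you skip over: your claim that $\tilde H \leq N^{\theta^2/r}$ (used to establish $H < T$ for the capped final centre) is true but needs the short computation that the exponent $2\theta m/(d-1) - rm(m+1)/d(d-1)$ of $N$ in \eqref{eq:Htilde} equals $2\theta\mu - r\mu^2 - r\mu(1-\mu)/d \leq \theta^2/r$ where $\mu = m/(d-1) \in [0,1]$; and you do not explain how to actually \emph{compute} the integer $H$ from the irrational quantity $\tilde H$ --- the paper spends a paragraph showing this can be done in $O((\lg N)^{3+\epsilon})$ time by approximating the $d(d-1)$-th root of a rational number of $O((\lg N)^3)$ bits, which is necessary to justify the stated running time.
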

\begin{proof}
   Set
   \[
      d \coloneqq \lceil \lg N \rceil + 1
   \]
   and define $m$ as in \eqref{eq:m}.
   Equivalently, $m$ is the largest integer such that $N^m \leq T^{d-1}$.
   Note that $d \geq 2$ (since $N \geq 2^r \geq 2$)
   and $m \geq \lfloor \lg T \rfloor \geq 2$
   (since $T \geq 4^{\sqrt 1} = 4$).
   Since $\lg(T^{d-1}) \leq d \lg T \ll (\lg N)^2$,
   we may clearly compute $d$ and $m$ in time $O((\lg N)^{2+\epsilon})$.
   Also, the assumption $T \leq N^{1/r}$ implies that
   $m \leq (d-1)/r \leq d/r$, so \eqref{eq:m-bound} holds.
   
   Let $\tilde H$ be defined as in \eqref{eq:Htilde}.
   Since $d \geq \lg N + 1$, Lemma \ref{lem:Htilde} implies that
   \[
      \tilde H > \tfrac13 N^{\theta^2/r} N^{-1/\lg N} = \tfrac16 N^{\theta^2/r}.
   \]
   Moreover, \eqref{eq:T-inequality} implies that
   $\theta \geq 2 \sqrt{r/\lg N}$,
   so we have $N^{\theta^2/r} \geq N^{4/\lg N} = 16$
   and hence $\tilde H > 16/6 > 2$.

   Let $H$ be the largest integer less than $\tilde H$,
   i.e., $H \coloneqq \big\lceil \tilde H \big\rceil - 1$.
   Then $2 \leq H < \tilde H$, and moreover,
   since $\tilde H > 2$, we also have
   \[
      H \geq \tilde H / 2 > \tfrac{1}{12} N^{\theta^2/r}.
   \]
   We may compute $H$ by first approximating
   the $d(d-1)$-th root of the rational number
   \[
      \tilde H^{d(d-1)} = \frac{T^{2drm}}{d^d 2^{d(d-1)/2} N^{rm(m+1)}},
   \]
   and then taking $d(d-1)$-th powers of nearby integers
   to find the correct value.
   The numerator has bit size at most
   $O(drm \lg T) = O(d^2 \lg N) = O(\lg^3 N)$,
   and the denominator also has bit size at most
   \[
      O(d \lg d + d^2 + rm^2 \lg N)
      = O(d^2 + d m \lg N) = O(d^2 \lg N) = O(\lg^3 N),
   \]
   so this can all be done in time $O((\lg N)^{3+\epsilon})$.

   We now apply Theorem \ref{thm:one-interval} with the parameters
   $N$, $r$, $d$, $m$, $H$, and with successively
   $P = T + H$, $P = T + 3H$, and so on,
   stopping when the interval $[T,T']$ has been exhausted
   by the subintervals $[P-H, P+H]$.
   The hypotheses \eqref{eq:r-bound}, \eqref{eq:m-bound} and \eqref{eq:P-bound}
   have already been checked above,
   and \eqref{eq:H-bound} follows from our choice of $H < \tilde H$
   because $P-H \geq T$.
   The number of subintervals is at most
   \[
      \left\lceil \frac{T' - T}{2H} \right\rceil
         \leq \frac{T' - T}{\frac16 N^{\theta^2/r}} + 1
         = \frac{6(T' - T)}{T} \cdot N^{\theta(1-\theta)/r} + 1.
   \]
   Finally, since $d \ll \lg N$,
   the cost of each invocation of Theorem \ref{thm:one-interval} is
   \[
      O\big( d^{7+\epsilon} (\tfrac 1r \lg N)^{2+\epsilon} \big)
         = O\left( \frac{(\lg N)^{9+\epsilon}}{r^2} \right).
      \qedhere
   \]
\end{proof}
\begin{rem}
   A slightly better choice for $d$ is to take $d \approx \theta \lg N$,
   but this complicates the analysis and only improves the main result
   by a constant factor.
\end{rem}

Finally we may prove the main theorem.
Recall that we are given as input
positive integers $N \geq 2$ and $r \leq \lg N$,
and we wish to find \emph{all} positive integers $p$ such that $p^r \divides N$.
Such divisors $p$ must clearly lie in $[1,N^{1/r}]$.
\begin{proof}[Proof of Theorem \ref{thm:main}]
   Let
   \[
      k \coloneqq \left\lceil 2 \sqrt{\lceil \lg N \rceil/r} \right\rceil.
   \]
   We first check all $p = 2, 3, \ldots, 2^k$ by brute force, i.e.,
   testing directly whether $p^r \divides N$.
   Note that $k$ may certainly be computed in time $O((\lg N)^{1+\epsilon})$.
   To estimate the cost of checking up to $2^k$, observe that
   \[
      k \leq 2 \sqrt{(\lg N)/r + 1} + 1.
   \]
   Let $C > 0$ be an absolute constant such that
   $2\sqrt{x+1} + 1 \leq x/4 + C$ for all $x \geq 1$;
   it follows that $k \leq (\lg N)/4r + C$,
   and hence that $2^k \ll N^{1/4r}$.
   The cost of checking up to $2^k$ is therefore
   $O(N^{1/4r} (\lg N)^{1+\epsilon})$,
   which is negligible compared to \eqref{eq:main-bound}.

   We now apply Proposition \ref{prop:big-interval} to the intervals
   $[2^k, 2^{k+1}]$, $[2^{k+1}, 2^{k+2}]$,
   and so on until we reach $N^{1/r}$,
   taking the last interval to be $[2^j, \lfloor N^{1/r} \rfloor]$
   for suitable $j$.
   Since $k \geq 2\sqrt{(\lg N)/r}$, the precondition \eqref{eq:T-inequality}
   is satisfied.
   For each interval we have $(T'-T)/T = O(1)$,
   and since $\theta \in [0,1]$ we have
   \[
      \theta(1-\theta) \leq \frac14.
   \]
   Therefore the cost of searching each interval is
   \[
      O\left( (N^{1/4r} + 1) \cdot \frac{(\lg N)^{9+\epsilon}}{r^2} \right)
      = O\left( N^{1/4r} \cdot \frac{(\lg N)^{9+\epsilon}}{r^2} \right).
   \]
   Finally, the number of intervals is at most
   $\lceil \lg(N^{1/r}) \rceil = O(\tfrac 1r \lg N)$.
\end{proof}
\begin{rem}
   The use of dyadic intervals in the above proof was only for convenience;
   the same argument would work with intervals $[B^j, B^{j+1}]$
   for any fixed $B > 1$.
\end{rem}
\begin{rem}
   \label{rem:worst-case}
   The expression $N^{\theta(1-\theta)/r}$ achieves its
   maximum value $N^{1/4r}$ at the point $\theta = 1/2$.
   This justifies the claim made in the introduction that the factors $p^r$
   that are ``hardest'' to find are those for which $p \sim N^{1/2r}$.
\end{rem}
\begin{rem}
   \label{rem:gaussian}
   A more careful analysis,
   taking into account the fact that $N^{\theta(1-\theta)/r}$
   is much smaller than $N^{1/4r}$ for most values of $\theta \in [0,1]$,
   shows that the bound \eqref{eq:main-bound} can be improved by a factor
   of $O((\tfrac1r \lg N)^{1/2})$.
   Let us briefly explain this calculation.
   The main contribution to the cost estimate in the above proof is the
   number of subintervals,
   i.e., the sum of the values of $N^{\theta(1-\theta)/r}$
   over the various dyadic intervals.
   It can be shown that this sum
   is essentially a Riemann sum approximating the integral
   \[
      \frac{\log N}{r} \int_0^1 N^{\theta(1-\theta)/r} d\theta.
   \]
   The argument in the proof of Theorem \ref{thm:main}
   amounted to estimating this integral via the trivial bound
   $\int_0^1 N^{\theta(1-\theta)/r} d\theta \leq
   \int_0^1 N^{1/4r} d\theta = N^{1/4r}$.
   A better estimate is obtained by recognising the integrand as a
   truncated Gaussian function, i.e.,
   \begin{align*}
      \int_0^1 N^{\theta(1-\theta)/r} d\theta
      & = \int_{-1/2}^{1/2} N^{(1/4-\alpha^2)/r} d\alpha \\
      & \leq N^{1/4r} \int_{-\infty}^\infty N^{-\alpha^2/r} d\alpha
      = \left(\frac{\pi r}{\log N}\right)^{1/2} N^{1/4r}.
   \end{align*}
\end{rem}

\appendix
\section{Deterministic root finding}
\label{sec:root-finding}

In this section we prove Theorem \ref{thm:root-finding}.
Our root-finding procedure consists of two parts.
In the first part,
we discuss how to deterministically find all integer roots
of a \emph{squarefree} polynomial $f \in \ZZ[x]$.
We mainly follow the approach of Loos \cite{Loos-zeroes},
but we obtain better complexity bounds by employing
faster algorithms for the underlying arithmetic.
In the second part,
we explain how to reduce the general case to the squarefree case.
The reduction depends on computing GCDs in $\ZZ[x]$;
for this purpose we present a rigorous, deterministic variant of
the ``heuristic GCD'' algorithm of Char, Geddes and Gonnet \cite{CGG-gcdheu}.

\subsection{Some preliminary estimates}

For $f, g \in \ZZ[x]$,
let $\res(f,g) \in \ZZ$ denote the resultant of $f$ and $g$.
\begin{lem}
   \label{lem:resultant-bound}
   Let $f, g \in \ZZ[x]$ be nonzero polynomials,
   and let $n \coloneqq \deg f$, $m \coloneqq \deg g$.
   Then
   \[
      \lvert \res(f,g) \rvert \leq (n+1)^{m/2} (m+1)^{n/2} \supnorm{f}^m \supnorm{g}^n.
   \]
\end{lem}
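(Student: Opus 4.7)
The plan is to express $\res(f,g)$ as the determinant of the Sylvester matrix of $f$ and $g$, and then apply Hadamard's inequality to bound that determinant by the product of the Euclidean norms of its rows.

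Concretely, I would first recall the standard formula $\res(f,g) = \det S$, where $S$ is the $(n+m) \times (n+m)$ Sylvester matrix of $f$ and $g$: its first $m$ rows contain shifted copies of the coefficient vector of $f$ (padded with zeros), and its last $n$ rows contain shifted copies of the coefficient vector of $g$. This identity is classical and can simply be quoted.

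Next, I would apply Hadamard's inequality, which says that $|\det S| \leq \prod_{i} \norm{r_i}$, where the $r_i$ are the rows of $S$ and $\norm{\cdot}$ denotes the Euclidean norm. Each of the $m$ rows associated with $f$ has exactly $n+1$ nonzero entries (the coefficients of $f$), each bounded in absolute value by $\supnorm{f}$, so its Euclidean norm is at most $(n+1)^{1/2} \supnorm{f}$. Analogously, each of the $n$ rows associated with $g$ has Euclidean norm at most $(m+1)^{1/2} \supnorm{g}$. Multiplying these $m + n$ bounds together yields
\[
   \lvert \res(f,g) \rvert \leq \left((n+1)^{1/2} \supnorm{f}\right)^m \left((m+1)^{1/2} \supnorm{g}\right)^n,
\]
which is exactly the stated estimate.

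There is no real obstacle here: the argument is entirely routine once one has the Sylvester representation and Hadamard's inequality in hand. The only minor care needed is to make sure the bookkeeping of which rows correspond to $f$ versus $g$ matches the exponents in the final bound (rows of $f$-type contribute the factor $(n+1)^{m/2}\supnorm{f}^m$, and rows of $g$-type contribute $(m+1)^{n/2}\supnorm{g}^n$), which is easy to verify directly from the definition of the Sylvester matrix.
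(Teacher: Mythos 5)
Your proof is correct and is exactly the argument the paper has in mind: the paper simply cites \cite[Thm.~6.23]{MCA_2013} and notes parenthetically that the proof applies Hadamard's inequality to the Sylvester matrix, which is precisely what you carry out.
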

\begin{proof}
   See \cite[Thm.~6.23]{MCA_2013}.
   (The proof uses Hadamard's bound to estimate the determinant
   of the Sylvester matrix associated to $f$ and $g$.)
\end{proof}

\begin{lem}[Mignotte's factor bound]
   \label{lem:mignotte}
   Let $f, g \in \ZZ[x]$ be nonzero polynomials,
   and let $n \coloneqq \deg f$, $m \coloneqq \deg g$.
   If $g$ divides $f$ in $\ZZ[x]$ then
   \[
      \supnorm{g} \leq (n+1)^{1/2} \, 2^m \supnorm{f}.
   \]
\end{lem}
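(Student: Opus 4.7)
The plan is to route the argument through Mahler's measure, which linearizes divisibility: for $f = a_n \prod_{i=1}^n (x - \alpha_i) \in \ZZ[x]$, set $M(f) \coloneqq |a_n| \prod_i \max(1, |\alpha_i|)$. The two structural facts I would rely on are (i) multiplicativity, $M(fh) = M(f) M(h)$, which is immediate from the factorization over $\mathbb{C}$, and (ii) the observation that any nonzero $h \in \ZZ[x]$ satisfies $M(h) \geq 1$, since its leading coefficient is a nonzero integer and the root factors $\max(1, |\beta_j|)$ are each at least $1$.

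With these in hand, I would write $f = gh$ with $h \in \ZZ[x]$ and deduce $M(g) \leq M(f)$ from (i) and (ii). Next, I would bound $\supnorm{g}$ above by $M(g)$: writing $g = b_m \prod_{j=1}^m (x - \beta_j)$, each coefficient of $g$ is $\pm b_m$ times an elementary symmetric polynomial in the $\beta_j$, so
\[
   |b_i| \leq \binom{m}{i} |b_m| \prod_j \max(1, |\beta_j|) = \binom{m}{i} M(g),
\]
giving $\supnorm{g} \leq \binom{m}{\lfloor m/2 \rfloor} M(g) \leq 2^m M(g)$. Finally, I would bound $M(f)$ in the opposite direction by $\supnorm{f}$ via Landau's inequality $M(f) \leq \|f\|_2$ (where $\|f\|_2$ is the Euclidean norm of the coefficient vector), combined with the trivial $\|f\|_2 \leq (n+1)^{1/2} \supnorm{f}$. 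Chaining these three estimates yields
\[
   \supnorm{g} \leq 2^m M(g) \leq 2^m M(f) \leq 2^m (n+1)^{1/2} \supnorm{f},
\]
which is exactly the claimed bound.

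The only genuinely non-elementary step is Landau's inequality $M(f) \leq \|f\|_2$, whose standard proof invokes Jensen's formula applied to $\theta \mapsto \log |f(e^{2\pi i \theta})|$ together with the arithmetic--geometric mean inequality on the Fourier coefficients. Since this is classical, I would simply cite it (it appears, for example, in \cite{MCA_2013}) rather than reprove it in the paper. Everything else in the argument is either routine linear algebra over $\mathbb{C}$ or the binomial identity $\sum_i \binom{m}{i} = 2^m$, so no further obstacle arises.
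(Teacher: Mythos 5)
Your argument is correct and is precisely the proof behind the result the paper cites: the paper simply refers to \cite[Cor.~6.33(ii)]{MCA_2013} and notes that the proof there relies on Landau's inequality for the Mahler measure, which is exactly the chain $\supnorm{g} \leq 2^m M(g) \leq 2^m M(f) \leq 2^m (n+1)^{1/2} \supnorm{f}$ you build via multiplicativity, $M(h)\geq 1$ for nonzero $h\in\ZZ[x]$, the elementary-symmetric coefficient bound, and Landau.
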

\begin{proof}
   See \cite[Cor.~6.33(ii)]{MCA_2013}.
   (The proof relies on Landau's inequality for the Mahler measure
   of a polynomial.)
\end{proof}

\begin{lem}
   \label{lem:theta-bound}
   For all $X \geq 2$ we have
   \[
      \sum_{p \leq X} \lg p > X/3
   \]
   (where the sum is taken over primes).
\end{lem}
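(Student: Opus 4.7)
The plan is to prove this by a classical Chebyshev-style argument using the central binomial coefficient $\binom{2n}{n}$, then reduce the real-$X$ bound to the integer case and handle small $X$ by direct verification.

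First I would establish the two standard estimates
\[
   \frac{4^n}{2n+1} \leq \binom{2n}{n} \leq 4^n,
\]
the lower one from $4^n=\sum_{k=0}^{2n}\binom{2n}{k}\leq(2n+1)\binom{2n}{n}$ and the upper from the binomial theorem (though I won't need the upper bound here). Next, using Legendre's formula and the fact that $\lfloor 2y\rfloor-2\lfloor y\rfloor\in\{0,1\}$, every prime power $p^{e_p}$ dividing $\binom{2n}{n}$ satisfies $p^{e_p}\leq 2n$. Splitting primes at $\sqrt{2n}$, each $p\leq\sqrt{2n}$ contributes a factor at most $2n$, while each $p$ with $\sqrt{2n}<p\leq 2n$ has $e_p\leq 1$ and contributes exactly $p$. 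Therefore
\[
   \binom{2n}{n} \leq (2n)^{\sqrt{2n}} \prod_{\sqrt{2n}<p\leq 2n} p
   \leq (2n)^{\sqrt{2n}}\,\bigl(\textstyle\prod_{p\leq 2n} p\bigr).
\]

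Combining the two bounds on $\binom{2n}{n}$ and taking $\lg$ of both sides yields
\[
   \sum_{p\leq 2n}\lg p \;\geq\; 2n - \lg(2n+1) - \sqrt{2n}\,\lg(2n).
\]
For any real $X\geq 2$, setting $n\coloneqq \lfloor X/2\rfloor$ (so $2n\geq X-1$) and using monotonicity of $\sum_{p\leq t}\lg p$ gives a similar lower bound of the shape $\sum_{p\leq X}\lg p \geq X - O(\sqrt{X}\lg X)$. The remaining task is then to verify the elementary inequality $X - \lg(X+1) - \sqrt{X}\,\lg X > X/3$, equivalently $2X/3 > \lg(X+1) + \sqrt{X}\,\lg X$, which holds for all $X\geq X_0$ for some explicit (modest) threshold $X_0$, since the right-hand side is $o(X)$.

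The one piece that still requires attention is the range $2\leq X<X_0$, which I would dispatch by a direct, finite computation of $\sum_{p\leq X}\lg p$: already $\lg 2 + \lg 3 + \lg 5 + \lg 7 \approx 7.6$ dwarfs $X/3$ for $X\leq 22$, and adding further primes $11, 13, 17, 19, \dots$ comfortably covers the gap up to $X_0$. The main (very mild) obstacle is calibrating the threshold $X_0$ so that both the asymptotic inequality and the finite check are easy; I would simply pick a convenient round value such as $X_0=200$, since the asymptotic inequality is easily verified there and the finite range is small enough to list the primes explicitly.
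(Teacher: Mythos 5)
Your argument is correct in outline, but it proceeds by a genuinely different route from the paper. The paper does not re-derive a Chebyshev-type lower bound at all: it observes that the claim is equivalent to $\vartheta(X)/X > \tfrac13\log 2 \approx 0.231$, cites Rosser and Schoenfeld (Theorem~10 of \cite{RS-primes}) for the much stronger estimate $\vartheta(X)/X > 0.84$ when $X \geq 101$, and then disposes of $2 \leq X < 101$ by a direct finite check. Your approach instead rebuilds a weak Chebyshev bound from scratch via $\binom{2n}{n}$, which is pleasantly self-contained and avoids any reliance on explicit analytic number theory. The trade-off is that your resulting inequality $\sum_{p\leq 2n}\lg p \geq 2n - \lg(2n+1) - \sqrt{2n}\,\lg(2n)$ is quite lossy (it actually fails to beat $X/3$ until roughly $X \approx 140$), so your finite verification range is larger than the paper's, and you also have to argue monotonicity of $\tfrac{2X}{3} - \lg(X+1) - \sqrt{X}\lg X$ past your threshold. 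Two small imprecisions worth noting, neither fatal: primes $p \in (\sqrt{2n}, 2n]$ contribute $p^{e_p}$ with $e_p \in \{0,1\}$, so ``contributes at most $p$,'' not ``exactly $p$''; and with $n = \lfloor X/2\rfloor$ one only gets $2n > X-2$ for general real $X$, not $2n \geq X-1$ (though the cleanest fix is to note $\sum_{p\leq X}\lg p$ is a step function and reduce the whole claim to integer $X$ at the outset).
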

\begin{proof}
   Let $\vartheta(X) \coloneqq \sum_{p \leq X} \log p$ denote the usual
   Chebyshev weighted prime counting function.
   The claim is that $\vartheta(X)/X > \frac13 \log 2$ ($\approx 0.231$)
   for all $X \geq 2$.
   For $X \geq 101$ this follows from \cite[Thm.~10]{RS-primes},
   which states that $\vartheta(X)/X > 0.84$ for $X \geq 101$.
   For $2 \leq X < 101$ the claim may be checked directly,
   for example by inspecting the graph of $\vartheta(X)/X$
   in the reader's favourite computer algebra system.
\end{proof}

\subsection{The squarefree case}

The core idea of Loos' algorithm is the following well-known
$p$-adic Hensel lifting strategy.
\begin{prop}
   \label{prop:hensel}
   Let $p$ be a prime, let $k$ be a positive integer,
   and let $f \in (\ZZ/p^k \ZZ)[x]$ be a polynomial of degree $n \geq 1$.
   Let $u \in \{0, \ldots, p-1\}$,
   and suppose that
   \[
      f(u) \equiv 0 \pmod p, \qquad f'(u) \not\equiv 0 \pmod p.
   \]
   Then there exists a unique
   $v \in \{0, \ldots, p^k - 1\}$ such that
   \[
      v \equiv u \pmod p, \qquad f(v) \equiv 0 \pmod{p^k}.
   \]
   Given $f$ and $u$ as input, we may compute $v$ in time
   \[
      O(n \lg(p^k)^{1+\epsilon}).
   \]
\end{prop}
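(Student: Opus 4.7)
The plan is to split the proof into two parts: prove existence and uniqueness of $v$ by induction on $k$, and then exhibit an algorithm that computes $v$ efficiently via Newton iteration with precision doubling.

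For existence and uniqueness, I would argue by induction on $k$. The base case $k=1$ is immediate, with $v = u$ the unique choice. For the inductive step, assume a unique $v_j \in \{0,\ldots,p^j-1\}$ with $v_j \equiv u \pmod{p}$ and $f(v_j) \equiv 0 \pmod{p^j}$. Any lift to $\ZZ/p^{j+1}\ZZ$ has the form $v_{j+1} = v_j + p^j t$ for some $t \in \{0,\ldots,p-1\}$, and the first-order expansion
\[
   f(v_j + p^j t) \equiv f(v_j) + p^j t\, f'(v_j) \pmod{p^{j+1}}
\]
reduces the condition $f(v_{j+1}) \equiv 0 \pmod{p^{j+1}}$ to $t\, f'(v_j) \equiv -p^{-j} f(v_j) \pmod{p}$. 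Since $v_j \equiv u \pmod p$ forces $f'(v_j) \equiv f'(u) \not\equiv 0 \pmod p$, this congruence determines $t$ uniquely modulo $p$, completing the induction.

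For the computation, naive single-digit lifting would cost $k$ polynomial evaluations and miss the target bound. Instead I would use a Newton iteration that doubles the working precision at each step, maintaining in parallel an approximate inverse of the derivative so as to avoid modular division. Specifically, at each stage I maintain $v, g \in \ZZ/p^{k'}\ZZ$ with $f(v) \equiv 0 \pmod{p^{k'}}$ and $g\, f'(v) \equiv 1 \pmod{p^{k'}}$, and update via
\[
   v' \coloneqq v - g f(v) \bmod p^{2k'}, \qquad g' \coloneqq g(2 - g f'(v')) \bmod p^{2k'}.
\]
Quadratic convergence (from the Taylor expansion of $f$, together with the standard Newton identity $(1 + p^{k'}h)(1 - p^{k'}h) = 1 - p^{2k'}h^2$ for the reciprocal) gives $f(v') \equiv 0 \pmod{p^{2k'}}$ and $g' f'(v') \equiv 1 \pmod{p^{2k'}}$. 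I initialise with $v = u$ and with $g$ an inverse of $f'(u)$ modulo $p$ computed by the extended Euclidean algorithm, double the precision until it exceeds $k$, and then reduce modulo $p^k$.

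For the complexity, the bottleneck at each doubling step is the evaluation of $f$ at the current $v$ modulo $p^{2k'}$: via Horner's rule this takes $n$ multiplications of integers of bit size $O(\lg p^{2k'})$, i.e., $O(n\, \M(\lg p^{2k'}))$ bit operations, where $\M(b) = O(b^{1+\epsilon})$ under our arithmetic assumptions. The scalar operations producing $v'$ and $g'$ cost only $O(\M(\lg p^{2k'}))$ each and are absorbed into that term. Summing over the $O(\lg k)$ doubling steps gives a geometric series dominated by its final term, for a total of $O(n\, \M(\lg p^k)) = O(n \lg(p^k)^{1+\epsilon})$, which is the stated bound. The one delicate point will be bookkeeping the setup phase (the initial inverse of $f'(u) \pmod p$ via extended Euclid, and evaluating $f'(u) \pmod p$) and verifying that these, along with the final reduction modulo $p^k$, are all absorbed; given the fast integer arithmetic assumption, I expect these to be routine.
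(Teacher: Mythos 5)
Your proposal is correct and follows essentially the same strategy as the paper: establish existence and uniqueness by induction, then obtain the $O(n \lg(p^k)^{1+\epsilon})$ bound via quadratic Hensel lifting with precision doubling, whose per-step Horner cost sums as a geometric series. The only cosmetic differences are that the paper runs the doubling recursion top-down with $\ell = \lceil k/2\rceil$ (using the same recursion for both the uniqueness proof and the algorithm) and simply inverts $f'(w)$ modulo $p^k$ afresh at each level, whereas you prove uniqueness by single-digit lifting and maintain an approximate inverse of $f'$ by Newton reciprocal iteration; both yield the same complexity.
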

\begin{proof}
   We argue by induction on $k$.
   If $k = 1$, we simply take $v = u$.
   Now assume that $k \geq 2$ and set $\ell \coloneqq \lceil k/2 \rceil < k$.
   By induction there exists a unique $w \in \{0, \ldots, p^\ell - 1\}$
   such that $w \equiv u \pmod p$ and $f(w) \equiv 0 \pmod{p^\ell}$.
   
   We first establish uniqueness of $v$.
   Suppose that $v$ has the desired properties, i.e.,
   $v \equiv u \pmod p$ and $f(v) \equiv 0 \pmod{p^k}$.
   By the uniqueness of $w$, we must have $v \equiv w \pmod{p^\ell}$,
   say $v = w + p^\ell t$ for some $t \in \{0, \ldots, p^{k-\ell}-1\}$.
   Expanding $f$ around $w$, we find that
   \[
      f(w + x) = f(w) + x f'(w) + x^2 g(x)
   \]
   for some $g \in (\ZZ/p^k \ZZ)[x]$.
   Substituting $x = p^\ell t$,
   and using the fact that $p^{2\ell} \equiv 0 \pmod{p^k}$,
   we deduce that $0 \equiv f(w) + p^\ell t f'(w) \pmod{p^k}$.
   Since $f'(w) \equiv f'(u) \not\equiv 0 \pmod p$,
   we may solve for $t$ to obtain
   \[
      t \equiv \frac{-f(w)/p^\ell}{f'(w)} \pmod{p^{k-\ell}}.
   \]
   This establishes uniqueness of $t \pmod{p^{k-\ell}}$,
   and hence of $v \pmod{p^k}$.
   Moreover, the same calculation gives an explicit formula for $v$,
   proving existence.
   
   To prove the complexity bound,
   suppose that we have already computed $w$ and that we wish to lift to $v$.
   We first apply Horner's rule to compute $f(w)$ and $f'(w)$
   using $O(n)$ arithmetic operations in $\ZZ/p^k\ZZ$.
   Each such operation requires time $O(\lg(p^k)^{1+\epsilon})$.
   Similarly, we may invert $f'(w)$, and hence compute $t$ and $v$,
   in time $O(\lg(p^k)^{1+\epsilon})$.
   Therefore, the time required to deduce $v$ from $w$ is
   $O(n \lg(p^k)^{1+\epsilon})$.
   The contributions from subsequent recursion levels form a geometric series,
   so the total cost of computing $v$ from $u$
   is also $O(n \lg(p^k)^{1+\epsilon})$.
\end{proof}

The next result shows how to find a reasonably small prime $p$
for which the $p$-adic lifting strategy is guaranteed to succeed.
\begin{prop}
   \label{prop:find-p}
   Let $b \geq n \geq 1$ be integers, and let $f \in \ZZ[x]$
   be a squarefree polynomial of degree $n$ such that $\supnorm{f} \leq 2^b$.
   Then in time
   \[
      O(n^{2+\epsilon} b^{1+\epsilon})
   \]
   we may find a prime number
   \[
      p \leq 6nb + 6 n \lg n
   \]
   such that the reduction of $f$ modulo $p$ is nonzero and squarefree
   in $(\ZZ/p\ZZ)[x]$.
\end{prop}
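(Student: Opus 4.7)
The strategy is to find a prime $p$ that does not divide the nonzero integer $D \coloneqq \res(f,f')$: for any such $p$, the nonsingularity of the Sylvester matrix of $f$ and $f'$ modulo $p$ forces $f \bmod p$ and $f' \bmod p$ to be both nonzero and coprime in $(\ZZ/p\ZZ)[x]$, so $f \bmod p$ is nonzero and squarefree, as required. Applying Lemma \ref{lem:resultant-bound} with $g = f'$ (so $m = n-1$), together with the elementary bound $\supnorm{f'} \leq n\supnorm{f} \leq n \cdot 2^b$, yields
\[
   |D| \leq (n+1)^{(n-1)/2} n^{3n/2} 2^{(2n-1)b}.
\]
Taking $\lg$ and using the hypothesis $b \geq n \geq 1$, a short calculation (the slack comes from $b - n/2 > 0$) gives $\lg|D| < 2nb + 2n\lg n = X/3$, where $X \coloneqq 6nb + 6n\lg n$. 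Lemma \ref{lem:theta-bound} then yields $\sum_{p \leq X}\lg p > X/3 > \lg|D| \geq \sum_{p \mid D}\lg p$, which forces the existence of some prime $p \leq X$ with $p \nmid D$.

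On the algorithmic side, I would sieve all primes up to $X$ (via the sieve of Eratosthenes in time $O(X \log\log X)$), then iterate through them in increasing order, returning the first $p$ for which $f \bmod p$ is nonzero and squarefree. The squarefreeness test reduces to computing $\gcd(f \bmod p,\, f' \bmod p)$ in $(\ZZ/p\ZZ)[x]$ via a fast half-GCD, costing $O(n (\lg n)^{O(1)} \lg p)$ per prime; summed over $p \leq X$ using $\sum_{p \leq X} \lg p = O(X)$, this contributes $O(n (\lg n)^{O(1)} \cdot X) = O(n^{2+\epsilon} b^{1+\epsilon})$, well within budget.

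The key obstacle is the cost of reducing the $b$-bit coefficients of $f$ modulo each of the $\Theta(nb/\lg(nb))$ primes $p \leq X$: performing each reduction independently costs $\Theta(nb)$ bit operations per prime and $\Theta(n^2 b^2/\lg(nb))$ in total, which exceeds the target by nearly a full factor of $b$. I would sidestep this by computing all reductions $f \bmod p$ for $p \leq X$ simultaneously via a product/remainder tree, reducing each coefficient of $f$ against the product of the primes $p \leq X$ in time $O(X(\lg X)^{2+\epsilon})$; multiplying by the $n$ coefficients gives the desired total $O(n^{2+\epsilon} b^{1+\epsilon})$, matching the stated bound.
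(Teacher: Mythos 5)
Your proposal is correct and follows essentially the same approach as the paper: the same underlying idea (find $p$ not dividing $D = \res(f,f')$), the same size bounds (resultant estimate plus the Chebyshev-type lower bound on $\sum_{p \leq X} \lg p$), and the same algorithmic plan (sieve the primes, reduce all coefficients simultaneously via a remainder tree, then compute polynomial GCDs modulo each prime).
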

\begin{proof}
   Since $f$ is squarefree, the resultant $D \coloneqq \res(f,f')$ is nonzero.
   Our goal is to find a prime $p$ such that $p \ndivides D$.

   First, by Lemma \ref{lem:resultant-bound} we have
   \[
      \lvert D \rvert \leq (n+1)^{(n-1)/2} n^{n/2} \supnorm{f}^{n-1} \supnorm{f'}^n.
   \]
   One easily checks that $(n+1)^{n-1} \leq n^n$ for all $n \geq 1$.
   Since $\supnorm{f'} \leq n \supnorm{f}$, we obtain
   \begin{equation}
      \label{eq:D-bound}
      |D| \leq n^{2n} \, 2^{2nb}.
   \end{equation}

   On the other hand, let $X \coloneqq 6bn + 6n \lg n$.
   If $D$ is divisible by all primes $p \leq X$,
   then $D$ is divisible by their product,
   so
   \[
      \lg |D| \geq \sum_{p \leq X} \lg p > X/3 = 2nb + 2n \lg n
   \]
   by Lemma \ref{lem:theta-bound}.
   This contradicts \eqref{eq:D-bound},
   so we conclude that there must exist a prime $p \leq X$
   such that $p \ndivides D$.
   To actually find such a prime, we run the following algorithm.

   \textit{Step 1 (list primes).}
   Make a list of all primes $p \leq Y$ for
   $Y \coloneqq 6nb + 6n \lceil \lg n \rceil = O(nb)$.
   Using the sieve of Eratosthenes,
   this requires time $O(Y^{1+\epsilon}) = O(n^{1+\epsilon} b^{1+\epsilon})$.

   \textit{Step 2 (reduce $f$ modulo primes).}
   Let $f_p \in (\ZZ/p\ZZ)[x]$ denote the reduction of $f$ modulo $p$.
   We compute $f_p$ for all $p \leq Y$
   by applying a fast simultaneous modular reduction algorithm
   \cite[Thm.~10.24]{MCA_2013} (i.e., using a remainder tree)
   to each coefficient of $f$.
   The bit size of the product of the primes is
   $O(\vartheta(Y)) = O(Y) = O(nb)$,
   and the number of primes is certainly $O(nb)$,
   so the cost per coefficient is $O(n^{1+\epsilon} b^{1+\epsilon})$.
   The total cost over all coefficients is therefore
   $O(n^{2+\epsilon} b^{1+\epsilon})$.

   \textit{Step 3 (compute GCDs).}
   For each $p \leq Y$, we compute $\gcd(f_p, f_p') \in (\ZZ/p\ZZ)[x]$
   using a quasilinear time GCD algorithm \cite[Cor.~11.9]{MCA_2013}.
   For each prime this requires
   $O(n^{1+\epsilon})$ ring operations in $\ZZ/p\ZZ$,
   and each ring operation costs $O((\lg p)^{1+\epsilon})$ bit operations.
   The hypothesis $n \leq b$ implies that $\lg p = O(\lg(nb)) = O(\lg b)$,
   so the cost of computing the GCD is
   $O(n^{1+\epsilon} b^{\epsilon})$ bit operations.
   The total cost over all $O(nb)$ primes
   is therefore $O(n^{2+\epsilon} b^{1+\epsilon})$.

   Finally, we return the least prime $p$ for which $f_p \neq 0$ and
   $\gcd(f_p, f_p') = 1$.
   As shown above, such a prime exists and satisfies $p \leq X$.
\end{proof}

We now give a deterministic root-finding algorithm for the squarefree case.
\begin{prop}
   \label{prop:squarefree-case}
   Let $b \geq n \geq 1$ be integers, and let $f \in \ZZ[x]$
   be a squarefree polynomial of degree $n$ such that $\supnorm{f} \leq 2^b$.
   Then we may find all integer roots of $f$ in time
   \[
      O(n^{2+\epsilon} b^{1+\epsilon}).
   \]
\end{prop}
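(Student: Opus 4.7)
The plan is to follow the $p$-adic Hensel lifting approach of Loos, combined with fast multipoint evaluation in the final verification step. In outline, the algorithm has four parts: (i)~find a small prime $p$ modulo which $f$ remains squarefree; (ii)~find all roots of $\bar f \coloneqq f \bmod p$ in $\ZZ/p\ZZ$ by brute force; (iii)~Hensel-lift each such root to an integer candidate; and (iv)~verify each candidate by computing $f(v_i)$ and checking for zero.

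For part~(i), Proposition~\ref{prop:find-p} directly produces a suitable prime $p = O(nb)$ at the required cost. For part~(ii), I would evaluate $\bar f(u)$ by Horner's rule at every $u \in \{0, \ldots, p-1\}$; each such evaluation uses $O(n)$ operations in $\ZZ/p\ZZ$, and each operation costs $O((\lg p)^{1+\epsilon}) = O(b^{\epsilon})$ (using $b \geq n$), so the total $O(np \cdot b^{\epsilon}) = O(n^{2} b^{1+\epsilon})$ fits within budget. This yields at most $n$ roots $u_1, \ldots, u_k$ of $\bar f$. For part~(iii), Mignotte's bound (Lemma~\ref{lem:mignotte}) applied to the linear factor $x - r$ of any integer root $r$ gives $|r| \leq 2(n+1)^{1/2}\, 2^{b}$; choosing the least $K \geq 1$ with $p^{K} > 4(n+1)^{1/2}\, 2^{b}$ yields $\lg p^{K} = O(b)$. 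Applying Proposition~\ref{prop:hensel} lifts each $u_i$ to a unique $v_i \in \{0, \ldots, p^{K}-1\}$ with $f(v_i) \equiv 0 \pmod{p^{K}}$, at total cost $O(n^{2} b^{1+\epsilon})$ over all $k \leq n$ roots. Interpreted via signed residues in $(-p^{K}/2, p^{K}/2]$, the resulting integer set $\{v_1, \ldots, v_k\}$ contains every integer root of $f$, by uniqueness of the Hensel lift together with the size bound on $|r|$.

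Part~(iv) is where I expect the main obstacle. Each candidate $v_i$ has bit size $\Theta(b)$, and a single Horner evaluation of $f$ at $v_i$ already costs $\Theta(n^{2} b)$ bit operations (since the intermediate values reach bit size $\Theta(nb)$), so processing the $k \leq n$ candidates one at a time would exceed the target by a factor of about $n$. The remedy is to batch the evaluations using fast multipoint evaluation in $\ZZ[x]$: first build the product tree for $\prod_i (x - v_i)$ using Kronecker substitution and fast integer multiplication, then descend the corresponding remainder tree, reducing $f$ modulo successively smaller subproducts and reading off $f(v_i)$ at the leaves. A level-by-level geometric-series analysis shows that both the product tree and the remainder-tree descent fit within $O(n^{2+\epsilon} b^{1+\epsilon})$ bit operations; combined with the earlier parts, this yields the claimed overall bound.
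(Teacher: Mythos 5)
Your parts (i)--(iii) coincide essentially with the paper's proof: it also invokes Proposition~\ref{prop:find-p}, finds the roots of $f_p$ by brute force, and Hensel-lifts them into $\ZZ/p^k\ZZ$. (A minor difference: the paper takes $p^k$ a bit larger, namely $p^k > (n+1)^{1/2}\,2^{n+b+1}$, for reasons explained below; your smaller $p^K$ is enough for \emph{your} verification step.)

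Part~(iv) is where you genuinely diverge. The paper sidesteps multipoint evaluation entirely. Since it has chosen $p^k$ large enough that not only the root $r$ but also the cofactor $g = f/(x-r)$ has all its symmetric residues recoverable (this is where Mignotte's bound~\ref{lem:mignotte} is applied to $g$, giving $\tsupnorm{g} \leq (n+1)^{1/2} 2^{n+b} < p^k/2$), it proceeds as follows for each Hensel-lifted candidate $\bar r$: divide $\bar f$ by $(x-\bar r)$ in $(\ZZ/p^k\ZZ)[x]$ to get $\bar g$, lift both $\bar r$ and $\bar g$ to the symmetric range to get $r^* \in \ZZ$ and $g^* \in \ZZ[x]$, and then check whether $(x - r^*)g^*(x) = f(x)$ by an explicit multiplication over $\ZZ$. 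The crucial point is that every integer involved in the synthetic division, the lifting, and the product $(x-r^*)g^*$ has bit size $O(\lg p^k) = O(b)$, so the per-candidate cost is $O(n\,b^{1+\epsilon})$ and the total over $\leq n$ candidates is $O(n^2\,b^{1+\epsilon})$. No remainder tree is needed.

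Your multipoint-evaluation route can be made to work, but there is a detail you should not gloss over: the bit-size of the intermediate quantities in the remainder tree. You cannot simply appeal to the operation count $O(\M(n)\log n)$ of classical multipoint evaluation, because over $\ZZ$ the intermediate \emph{quotients} in the tree can have much larger coefficients than the remainders themselves, and the naive worst-case bound on those quotients is far worse than $O(nb)$ bits. The clean fix is to observe that $|f(v_i)| \leq (n+1)2^b\max(1,|v_i|)^n = 2^{O(nb)}$, fix a modulus $M = 2^{\Theta(nb)}$ larger than twice this bound, and run the whole product/remainder tree in $\ZZ/M\ZZ$; then every coefficient has $O(nb)$ bits, the per-level data volume is $O(n^2 b)$, and your geometric-series argument does indeed yield $O(n^{2+\epsilon}b^{1+\epsilon})$. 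At the leaves you recover $f(v_i) \bmod M$, lift to the symmetric range, and test for zero. (You do need to evaluate modulo $M$ of $\Theta(nb)$ bits rather than modulo $p^K$ of $\Theta(b)$ bits, since $f(v_i)\bmod p^K$ vanishes for every Hensel-lifted candidate by construction and carries no information.) With that precision specified, your approach is correct and of the same complexity; the paper's divide-and-reconstruct trick is arguably the slicker of the two, since it never needs to manipulate $\Theta(nb)$-bit integers at all.
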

\begin{proof}
   As above, let $f_p \in (\ZZ/p\ZZ)[x]$ denote the reduction of $f$ modulo $p$.
   We first invoke Proposition \ref{prop:find-p} to find a prime $p = O(nb)$
   such that $f_p$ is nonzero and squarefree.
   Then we perform the following steps.

   \textit{Step 1 (find roots mod $p$).}
   Compute the roots of $f_p$ in $\ZZ/p\ZZ$ by brute force,
   i.e., by evaluating $f_p(i)$ for $i = 0, \ldots, p-1$.
   Note that the integer roots of $f$ correspond to \emph{distinct}
   roots of $f_p$, thanks to the squarefreeness of $f_p$.
   Each $f_p(i)$ may be evaluated in time $O(n^{1+\epsilon} b^{\epsilon})$,
   so the cost of this step is
   $O(p n^{1+\epsilon} b^{\epsilon}) = O(n^{2+\epsilon} b^{1+\epsilon})$.

   \textit{Step 2 (find roots mod $p^k$).}
   Let $\bar f \in (\ZZ/p^k\ZZ)[x]$ be the reduction of $f$ modulo $p^k$,
   where $k$ is chosen to be the smallest integer such that
   \begin{equation}
      \label{eq:pk}
      p^k > (n+1)^{1/2} \, 2^{n+b+1}.
   \end{equation}
   Applying Proposition \ref{prop:hensel} to $\bar f$,
   we lift each of the roots of $f_p$ found in Step 1 to a root of~$\bar f$.
   The uniqueness claim in Proposition \ref{prop:hensel} implies that
   the resulting set of lifted roots in $\ZZ/p^k\ZZ$
   includes the reductions modulo $p^k$
   of all of the actual integer roots of $f$.
   To estimate the complexity,
   observe that $p^k \leq p (n+1)^{1/2} \, 2^{n+b+1}$, so
   \[
      \lg(p^k) = O(\lg p + \lg(n+1) + n + b) = O(b).
   \]
   The cost of lifting each root is therefore
   $O(n \lg(p^k)^{1+\epsilon}) = O(n b^{1+\epsilon})$,
   and the total cost of this step is $O(n^2 b^{1+\epsilon})$.

   \textit{Step 3 (check roots in $\ZZ$).}
   For each root $\bar r \in \ZZ/p^k \ZZ$ of $\bar f$ found in Step~2,
   we determine whether it arises from a genuine integer root of $f$
   as follows.
   We first lift $\bar r$ to a candidate root $r^* \in \ZZ$ satisfying
   $r^* \equiv \bar r \pmod{p^k}$ and $r^* \in [-\frac12 p^k, \frac12 p^k)$.
   We next divide $\bar f$ by $x - \bar r$ to obtain a polynomial
   $\bar g \in (\ZZ/p^k\ZZ)[x]$ such that
   $\bar f(x) = (x - \bar r) \bar g(x)$,
   and we lift $\bar g$ to a polynomial $g^* \in \ZZ[x]$ satisfying
   $g^* \equiv \bar g \pmod{p^k}$
   and whose coefficients also all lie in $[-\frac12 p^k, \frac12 p^k)$.
   We then multiply $x-r^*$ by $g^*(x)$ (in $\ZZ[x]$)
   and check whether we obtain $f$.
   If so, then $f(r^*) = 0$,
   so $r^*$ must be the integer root corresponding to~$\bar r$.
   Otherwise, as we will see in the next paragraph,
   this $\bar r$ does not correspond to any integer root and we may ignore it.
   This procedure requires $O(n)$ operations on integers of $O(b)$ bits,
   i.e., $O(n b^{1+\epsilon})$ bit operations,
   so the total cost over all roots is $O(n^2 b^{1+\epsilon})$.
   
   We now prove that the procedure described above
   does in fact find all integer roots.
   (The following argument is adapted from \cite[\S15.6]{MCA_2013}.)
   Let $r \in \ZZ$ be a root of~$f$.
   Then $|r| \leq \supnorm{f} \leq 2^b$
   (as $r$ divides the constant term of $f$),
   and $f$ factors as $f(x) = (x-r)g(x)$ for some $g \in \ZZ[x]$
   satisfying $\supnorm{g} \leq (n+1)^{1/2} \, 2^{n+b}$
   (by Lemma \ref{lem:mignotte}).
   In particular, \eqref{eq:pk} ensures that
   $|r| < p^k/2$ and $\supnorm{g} < p^k/2$.
   Let $\bar r \in \ZZ/p^k\ZZ$ be the root of $\bar f$ corresponding to $r$,
   and let $r^* \in \ZZ$ and $g^* \in \ZZ[x]$ be the quantities computed
   in Step~3 for this $\bar r$.
   Then $r^* \equiv \bar r \equiv r \pmod{p^k}$,
   so we must have $r^* = r$,
   since both sides lie in $[-\frac12 p^k, \frac12 p^k)$.
   Similarly, we have
   \[
      g^*(x) \equiv \bar g(x) = \bar f(x)/(x-\bar r)
         \equiv f(x)/(x-r) = g(x) \pmod{p^k},
   \]
   so again we must have $g^* = g$ as the coefficients on both sides lie in
   $[-\frac12 p^k, \frac12 p^k)$.
   Therefore $(x-r^*)g^*(x) = f(x)$,
   and the procedure does indeed recover $r$.
\end{proof}

\begin{rem}
   Loos \cite{Loos-zeroes} imposes the additional requirement that
   $p$ should not divide the leading coefficient of $f$,
   to ensure that $\deg f_p = \deg f$.
   This is because he is searching for \emph{rational} roots,
   not just integral roots.
   Our algorithm may also be easily adapted to this case.
\end{rem}

\begin{rem}
   \label{rem:quasilinear1}
   An interesting question is whether the complexity bound in
   Proposition \ref{prop:squarefree-case} can be improved to quasilinear,
   i.e., to $O(n^{1+\epsilon} b^{1+\epsilon})$ bit operations.
   There are two main obstructions to this.

   First, although $f_p \in (\ZZ/p\ZZ)[x]$ is squarefree
   for almost all primes $p$,
   it is difficult to predict in advance for which $p$ this will occur.
   Consequently, in the proof of Proposition \ref{prop:find-p} we were forced
   to test \emph{every} prime up to $O(nb)$.
   If we allow probabilistic algorithms,
   then we can find a suitable prime with high probability by randomly
   selecting $p$ in the range $2 \leq p \leq X'$ for some $X' = O(nb)$.
   This allows us to find a suitable $p$ in \emph{expected} quasilinear time.
   The complexity of Steps 1 and 2 in Proposition \ref{prop:squarefree-case},
   i.e., finding the roots modulo $p$ and lifting them to $\ZZ/p^k \ZZ$,
   can also be improved to (deterministic) quasilinear time
   by means of fast multipoint evaluation techniques.
   The resulting algorithm is quite similar to the root finding algorithm
   presented in \cite[Thm.~15.21]{MCA_2013}.

   The second obstruction concerns Step 3 of
   Proposition \ref{prop:squarefree-case}, namely,
   checking which of the candidate integer roots are in fact roots of $f$.
   We do not know how to carry out this step rigorously in quasilinear time,
   even allowing randomised algorithms.
   A similar issue occurs in \cite[Thm.~15.21]{MCA_2013},
   where the last term of the given complexity bound corresponds
   in our notation to $O(n^{2+\epsilon} b^{1+\epsilon})$.
   In the discussion following that theorem,
   von zur Gathen and Gerhard suggest testing the candidate roots modulo
   a small prime (different to $p$) as a way to
   quickly rule out incorrect candidates.
   This idea can be turned into a ``Monte Carlo'' algorithm:
   one would randomly choose a small prime $q$,
   compute $f(r^*) \pmod q$ for all candidate roots $r^*$,
   and declare the ones for which $f(r^*) \equiv 0 \pmod q$
   to be the true roots.
   We suspect that in this way one can obtain a
   quasilinear expected running time
   with an exponentially small probability of failure,
   but we have not checked the details.
\end{rem}

\subsection{The general case}

In order to prove Theorem \ref{thm:root-finding},
we must first discuss the computation of GCDs in $\ZZ[x]$.

Let $f, g \in \ZZ[x]$ and let $h \coloneqq \gcd(f,g)$.
The idea of the ``heuristic GCD'' algorithm \cite{CGG-gcdheu}
is to use an \emph{integer} GCD algorithm to compute $\gcd(f(N), g(N))$
for some choice of evaluation point $N \in \ZZ$.
If we are lucky, then $\gcd(f(N), g(N))$ will actually be equal to $h(N)$,
and we may simply read off the coefficients of $h(x)$ from $h(N)$,
provided that $N$ is not too small.
However, it is possible for $\gcd(f(N), g(N))$ to contain extraneous factors
unrelated to $h(x)$.
Usually these extraneous factors are small but in rare circumstances
they can be very large.
The algorithm can be made to tolerate extraneous factors up to a given size
by taking larger values of $N$,
at the expense of running more slowly.
In practice, one usually takes a fairly small value of $N$,
accepting a small chance of failure in order to get a fast algorithm.
In the next result we work at the other extreme,
taking $N$ so large that the algorithm is guaranteed to work in all cases.
We thereby obtain a GCD algorithm that is deterministic
and completely rigorous (although unfortunately quite slow in practice).

\begin{prop}
   \label{prop:gcd}
   Let $b \geq n \geq 1$ be integers.
   Let $f, g \in \ZZ[x]$ be nonzero polynomials such that
   $\deg f, \deg g \leq n$ and $\supnorm{f}, \supnorm{g} \leq 2^b$.
   Assume that at least one of $f$ and $g$ is primitive.
   Define
   \[
      h \coloneqq \gcd(f,g) \in \ZZ[x],
      \qquad \tilde f \coloneqq f / h \in \ZZ[x],
      \qquad \tilde g \coloneqq g / h \in \ZZ[x].
   \]
   Then
   \begin{equation}
      \label{eq:hfg-bound}
      \tsupnorm{h}, \tsupnorm{\tilde f}, \tsupnorm{\tilde g}
         \leq (n+1)^{1/2} \, 2^{n+b},
   \end{equation}
   and given $f$ and $g$ as input,
   we may compute $h$, $\tilde f$ and $\tilde g$ in time
   \[
      O(n^{2+\epsilon} b^{1+\epsilon}).
   \]
\end{prop}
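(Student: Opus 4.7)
The sup-norm bounds in \eqref{eq:hfg-bound} follow immediately from Mignotte's factor bound (Lemma \ref{lem:mignotte}): each of $h$, $\tilde f$, and $\tilde g$ has degree at most $n$ and divides either $f$ or $g$ in $\ZZ[x]$, so each of their sup-norms is at most $(n+1)^{1/2}\,2^n \cdot 2^b = (n+1)^{1/2}\,2^{n+b}$. The primitivity hypothesis on $f$ or $g$ guarantees that $h$ is (up to sign) a genuine $\ZZ[x]$-gcd of $f$ and $g$, so that the cofactors $\tilde f = f/h$ and $\tilde g = g/h$ lie in $\ZZ[x]$ rather than merely over the rationals.

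For the algorithm, my plan is to execute the Char--Geddes--Gonnet heuristic GCD idea with a worst-case-safe choice of evaluation point. Set $M \coloneqq (n+1)^{1/2}\,2^{n+b}$, so that $\supnorm{h}, \supnorm{\tilde f}, \supnorm{\tilde g} \leq M$ by the first paragraph. Choose an integer $N > 2M$ of bit length $\Theta(b)$, and compute $A \coloneqq f(N)$ and $B \coloneqq g(N)$ by Horner's rule; each has bit length $O(n \lg N + b) = O(nb)$, and a standard analysis using fast integer multiplication gives a cost of $O(n^{2+\epsilon} b^{1+\epsilon})$ bit operations. Compute $G \coloneqq \gcd(A,B)$ in the same time bound using the quasilinear integer GCD. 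The choice $N > 2M$ guarantees that any polynomial in $\ZZ[x]$ of degree $\leq n$ with sup-norm $\leq M$ is uniquely recoverable from its value at $N$ via the symmetric (balanced) base-$N$ digit expansion; we therefore read off a candidate $\hat h \in \ZZ[x]$ from $G$ and verify by exact polynomial division in $\ZZ[x]$ that $\hat h \divides f$ and $\hat h \divides g$. If so, return $h \coloneqq \hat h$, $\tilde f \coloneqq f/h$, $\tilde g \coloneqq g/h$.

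The main obstacle is the potential presence of an extraneous factor in $G$: since $\tilde f$ and $\tilde g$ are coprime over the rationals, we have $G = h(N) \cdot D$ with $D \coloneqq \gcd(\tilde f(N), \tilde g(N))$ dividing $\res(\tilde f, \tilde g)$, and by Lemma \ref{lem:resultant-bound} combined with \eqref{eq:hfg-bound} one has $|\res(\tilde f, \tilde g)| \leq (n+1)^n M^{2n}$, of bit length $O(nb)$. When $D = 1$ the verification succeeds and $\hat h = h$ by uniqueness of the base-$N$ expansion. When $D > 1$, any $\hat h$ dividing both $f$ and $g$ in $\ZZ[x]$ must divide the $\ZZ[x]$-gcd $h$, forcing $\hat h(N) \divides h(N)$ and contradicting $\hat h(N) = h(N) \cdot D > h(N)$; so verification provably fails, and failure is safely detectable. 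The hard part of the argument will be to ensure, deterministically and without too much retrying, that $D = 1$ on a successful attempt --- for instance by iterating over a short sequence of carefully spaced values of $N$ and using a density bound for the ``bad'' residues modulo the prime factors of the a priori resultant bound $(n+1)^n M^{2n}$ to show that only $O(1)$ attempts are required, keeping the overall running time within $O(n^{2+\epsilon} b^{1+\epsilon})$.
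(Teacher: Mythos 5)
Your sup-norm bounds and the high-level setup (evaluate at an integer $N$, take an integer GCD, bound the extraneous factor $D = \gcd(\tilde f(N), \tilde g(N))$ via the resultant $\res(\tilde f, \tilde g)$) match the paper. But the algorithmic core has a genuine gap, and it is exactly the part you flag as the ``hard part.'' You pick $N$ of bit length $\Theta(b)$, which is only large enough to recover $h$ when $D = 1$, and then you would need to argue that after trying $O(1)$ ``carefully spaced'' values of $N$ one must hit $D = 1$. No such deterministic density argument is given, and it is not clear one exists within the stated time budget: $D$ can depend on $N$ in a complicated way (it divides $\res(\tilde f,\tilde g)$, but which divisor it is varies with $N$), and ruling out that $D > 1$ for each of a short list of chosen $N$'s would seem to require information you do not have in advance. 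This is precisely the obstruction that makes the naive heuristic GCD only a heuristic.

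The paper sidesteps this entirely by a different choice of scale: it takes $N = 2^c$ with $c = O(nb)$ (not $O(b)$), large enough that $|\delta(N) h_i| \leq 2^{c-2}$ for \emph{every} possible extraneous factor $\delta(N)$, since $\delta(N)$ divides the resultant $R = \res(\tilde f,\tilde g)$ whose bit size is $O(nb)$. Then $\gcd(f(N),g(N)) = \delta(N) h(N) = \sum_i (\delta(N) h_i) N^i$ has non-overlapping base-$N$ digits, so one can read off the products $\delta(N) h_i$ directly. Finally, because at least one of $f,g$ is primitive, $h$ is primitive, so $\delta(N) = \gcd_i(\delta(N) h_i)$; dividing this out recovers $h$, and then $\tilde f, \tilde g$ by integer division of $f(N), g(N)$ by $h(N)$. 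No iteration over evaluation points and no verification step are needed, and the whole computation fits in $O(n^{2+\epsilon} b^{1+\epsilon})$ because all the integers involved have bit size $O(nb)$. In short: rather than trying to force the extraneous factor to be $1$, make $N$ large enough that the extraneous factor can be detected and removed unconditionally. If you want to salvage your smaller-$N$ version, you would have to either (a) prove a quantitative bound on how many evaluation points can be ``bad,'' which is not obviously within reach deterministically, or (b) adopt the paper's larger $N$, in which case the verification and retry machinery becomes unnecessary.
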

\begin{proof}
   The inequalities \eqref{eq:hfg-bound} follow immediately from
   Lemma \ref{lem:mignotte},
   as $\tilde f$ and $\tilde g$ are divisors of $f$ and $g$ respectively,
   and $h$ is a divisor of both.
   
   For any integer $N$ we have
   $f(N) = \tilde f(N) h(N)$ and $g(N) = \tilde g(N) h(N)$, so
   \[
      \gcd(f(N),g(N)) = \delta(N) \cdot h(N)
      \qquad \text{where } \delta(N) \coloneqq \gcd(\tilde f(N), \tilde g(N)).
   \]
   Writing $h(x) = h_0 + h_1 x + \cdots + h_n x^n$, this becomes
   \begin{equation}
      \label{eq:gcd}
      \gcd(f(N), g(N))
         = \delta(N) h_0 + \delta(N) h_1 N + \cdots + \delta(N) h_n N^n.
   \end{equation}

   We may bound the quantities $\delta(N) h_i$ independently of $N$ as follows.
   (This argument is adapted from \cite[Thm.~4]{DP-heugcd}.)
   Since $\tilde f$ and $\tilde g$ are relatively prime,
   their resultant $R \coloneqq \res(\tilde f, \tilde g)$ is nonzero,
   and there exist polynomials $r, s \in \ZZ[x]$ such that
   \[
      r(x) \tilde f(x) + s(x) \tilde g(x) = R.
   \]
   Substituting $x = N$ shows that $\delta(N) \divides R$.
   Applying Lemma \ref{lem:resultant-bound}, we obtain
   \[
      |\delta(N)| \leq |R|
         \leq (n+1)^n \tsupnorm{\tilde f}^n \tsupnorm{\tilde g}^n
         \leq (n+1)^{2n} \, 2^{2n^2 + 2nb}.
   \]
   Therefore the quantities $\delta(N) h_i$ are bounded by
   \[
      |\delta(N) h_i| \leq |\delta(N)| \supnorm{h}
         \leq (n+1)^{2n+\frac12} \, 2^{2n^2 + 2nb + n + b}.
   \]

   We may now describe the actual algorithm for computing
   $h$, $\tilde f$ and $\tilde g$.

   \textit{Step 1.}
   Set $N \coloneqq 2^c$ where
   \[
      c \coloneqq (2n+1) \lceil \lg(n+1) \rceil + 2n^2 + 2nb + n + b + 2.
   \]
   As shown above,
   $|\delta(N) h_i| \leq 2^{c-2}$ for all $i$.
   Notice also that $c = O(nb)$.

   \textit{Step 2.}
   We compute $f(N)$ and $g(N)$,
   which amounts to concatenating the coefficients of $f$ and $g$
   with appropriate zero-padding
   (or one-padding in the case of negative coefficients).
   The integers $f(N)$ and $g(N)$ have bit size $O(nc) = O(n^2 b)$,
   and the concatenation may be performed in linear time,
   i.e., in time $O(n^2 b)$.

   \textit{Step 3.}
   We compute $\gcd(f(N), g(N))$ using a quasilinear time GCD algorithm
   (see for example \cite{SZ-gcd}).
   This requires time
   $O((n^2 b)^{1+\epsilon}) = O(n^{2+\epsilon} b^{1+\epsilon})$.

   \textit{Step 4.}
   We read off the coefficients $\delta(N) h_i$ from \eqref{eq:gcd}.
   This is possible thanks to the bound $|\delta(N) h_i| \leq 2^{c-2}$,
   i.e., the coefficients do not ``overlap''.
   (In more detail,
   we may first read off $\delta(N) h_0$ from the lowest $c$ bits,
   i.e., by reading \eqref{eq:gcd} modulo $N = 2^c$.
   After subtracting off this term,
   we may read off $\delta(N) h_1$ from the next $c$ bits, and so on.)
   This requires linear time $O(n^2 b)$.

   \textit{Step 5.}
   Since we assumed that at least one of $f$ and $g$ is primitive,
   $h$ is also primitive.
   We may therefore compute $\delta(N)$ by taking the GCDs of the integers
   $\delta(N) h_0, \ldots, \delta(N) h_n$.
   Each pairwise GCD requires time $O(c^{1+\epsilon})$,
   so the total time required for this step is
   $O(n c^{1+\epsilon}) = O(n^{2+\epsilon} b^{1+\epsilon})$.

   \textit{Step 6.}
   We now recover $h(N) = \gcd(f(N),g(N)) / \delta(N)$,
   and then $\tilde f(N) = f(N) / h(N)$ and $\tilde g(N) = g(N) / h(N)$.
   Using a quasilinear time integer division algorithm,
   this requires time $O(n^{2+\epsilon} b^{1+\epsilon})$.
   Finally, we read off the coefficients of $\tilde f$ and $\tilde g$
   from $\tilde f(N)$ and $\tilde g(N)$, in a similar manner to Step 4.
\end{proof}

\begin{rem}
   \label{rem:quasilinear2}
   To the best of the authors' knowledge,
   it is not known how to improve the complexity bound in
   Proposition \ref{prop:gcd} to quasilinear without giving up on determinism.
   Several randomised quasilinear-time algorithms are known.
   Sch\"onhage \cite{Sch-prob-gcds} analyses a variant of the
   heuristic GCD algorithm in which the evaluation point is chosen randomly.
   Another approach is to compute the GCD modulo a collection of
   randomly chosen small primes \cite[Alg.~6.38]{MCA_2013}.
\end{rem}

We may now prove our main root-finding result.
\begin{proof}[Proof of Theorem \ref{thm:root-finding}]
   We are given as input $f \in \ZZ[x]$, not necessarily squarefree,
   with $\deg f = n$ and $\supnorm{f} \leq 2^b$, where $b \geq n \geq 1$.
   
   We first compute the GCD of the coefficients of $f$,
   and remove this common factor.
   Clearly this can be done in time $O(n^{1+\epsilon} b^{1+\epsilon})$,
   and we may subsequently assume that $f$ is primitive.
   
   Let $g \coloneqq f'$.
   Then $\deg g \leq n$ and
   $\supnorm{g} \leq n \supnorm{f} \leq 2^{b'}$
   where $b' \coloneqq b + \lceil \lg n \rceil = O(b)$.
   Applying Proposition \ref{prop:gcd},
   we may compute $\tilde f = f/\gcd(f,f') \in \ZZ[x]$ in time
   $O(n^{2+\epsilon} (b')^{1+\epsilon}) = O(n^{2+\epsilon} b^{1+\epsilon})$.
   Then $\tilde f$ is squarefree and has the same integer roots as $f$.
   Moreover we have $\deg \tilde f \leq n$ and
   $\tsupnorm{\tilde f} \leq (n+1)^{1/2} \, 2^{n+b'} \leq 2^{b''}$
   where $b'' \coloneqq n + b' + \lceil \lg(n+1) \rceil = O(b)$.
   
   Finally, we apply Proposition \ref{prop:squarefree-case} to $\tilde f$.
   The running time is
   $O(n^{2+\epsilon} (b'')^{1+\epsilon}) = O(n^{2+\epsilon} b^{1+\epsilon})$.
\end{proof}

\bibliographystyle{amsalpha}
\bibliography{rpowerdivisors}

\providecommand{\bysame}{\leavevmode\hbox to3em{\hrulefill}\thinspace}
\providecommand{\MR}{\relax\ifhmode\unskip\space\fi MR }
\providecommand{\MRhref}[2]{%
  \href{http://www.ams.org/mathscinet-getitem?mr=#1}{#2}
}
\providecommand{\href}[2]{#2}
\begin{thebibliography}{BDHG99}

\bibitem[BDHG99]{BDH-prq}
D.~Boneh, G.~Durfee, and N.~Howgrave-Graham, \emph{Factoring {$N=p^rq$} for
  large {$r$}}, Advances in cryptology---{CRYPTO} '99 ({S}anta {B}arbara,
  {CA}), Lecture Notes in Comput. Sci., vol. 1666, Springer, Berlin, 1999,
  pp.~326--337. \MR{1729303}

\bibitem[BHK15]{BHK-squarefree}
A.~R. Booker, G.~A. Hiary, and J.~P. Keating, \emph{Detecting squarefree
  numbers}, Duke Math. J. \textbf{164} (2015), no.~2, 235--275. \MR{3306555}

\bibitem[BZ11]{BZ-mca}
R.~P. Brent and P.~Zimmermann, \emph{Modern {C}omputer {A}rithmetic}, Cambridge
  Monographs on Applied and Computational Mathematics, vol.~18, Cambridge
  University Press, Cambridge, 2011. \MR{2760886}

\bibitem[CGG84]{CGG-gcdheu}
B.~W. Char, K.~O. Geddes, and G.~H. Gonnet, \emph{G{CDHEU}: heuristic
  polynomial {GCD} algorithm based on integer {GCD} computation}, E{UROSAM} 84
  ({C}ambridge, 1984), Lecture Notes in Comput. Sci., vol. 174, Springer,
  Berlin, 1984, pp.~285--296. \MR{779134}

\bibitem[Cop97]{Cop-lowexponent}
D.~Coppersmith, \emph{Small solutions to polynomial equations, and low exponent
  {RSA} vulnerabilities}, J. Cryptology \textbf{10} (1997), no.~4, 233--260.
  \MR{1476612}

\bibitem[DP85]{DP-heugcd}
J.~Davenport and J.~Padget, \emph{H{EUGCD}: how elementary upperbounds generate
  cheaper data}, E{UROCAL} '85, {V}ol. 2 ({L}inz, 1985), Lecture Notes in
  Comput. Sci., vol. 204, Springer, Berlin, 1985, pp.~18--28. \MR{826554}

\bibitem[Gal12]{Gal-cryptomath}
S.~D. Galbraith, \emph{Mathematics of {P}ublic {K}ey {C}ryptography}, Cambridge
  University Press, Cambridge, 2012. \MR{2931758}

\bibitem[HH21]{HH-onefifthloglog}
D.~Harvey and M.~Hittmeir, \emph{A log-log speedup for exponent one-fifth
  deterministic integer factorisation}, to appear in Mathematics of
  Computation, arXiv preprint \url{https://arxiv.org/abs/2105.11105}, 2021.

\bibitem[LLL82]{LLL-factoring}
A.~K. Lenstra, H.~W. Lenstra, Jr., and L.~Lov\'{a}sz, \emph{Factoring
  polynomials with rational coefficients}, Math. Ann. \textbf{261} (1982),
  no.~4, 515--534. \MR{682664}

\bibitem[Loo83]{Loos-zeroes}
R.~Loos, \emph{Computing rational zeros of integral polynomials by {$p$}-adic
  expansion}, SIAM J. Comput. \textbf{12} (1983), no.~2, 286--293. \MR{697160}

\bibitem[Pap94]{Pap-complexity}
C.~H. Papadimitriou, \emph{Computational {C}omplexity}, Addison-Wesley
  Publishing Company, Reading, MA, 1994. \MR{1251285 (95f:68082)}

\bibitem[PO96]{PO-ecm}
Ren{\'e} Peralta and Eiji Okamoto, \emph{Faster factoring of integers of a
  special form}, IEICE Transactions on Fundamentals of Electronics,
  Communications and Computer Sciences \textbf{79} (1996), no.~4, 489--493.

\bibitem[Pol74]{Pol-factorization}
J.~M. Pollard, \emph{Theorems on factorization and primality testing}, Proc.
  Cambridge Philos. Soc. \textbf{76} (1974), 521--528. \MR{0354514 (50 \#6992)}

\bibitem[RS62]{RS-primes}
J.~B. Rosser and L.~Schoenfeld, \emph{Approximate formulas for some functions
  of prime numbers}, Illinois J. Math. \textbf{6} (1962), 64--94. \MR{0137689}

\bibitem[Sch88]{Sch-prob-gcds}
A.~Sch\"{o}nhage, \emph{Probabilistic computation of integer polynomial
  {GCD}s}, J. Algorithms \textbf{9} (1988), no.~3, 365--371. \MR{955145}

\bibitem[Str77]{Str-factoring}
V.~Strassen, \emph{Einige {R}esultate \"uber {B}erechnungskomplexit\"at}, Jber.
  Deutsch. Math.-Verein. \textbf{78} (1976/77), no.~1, 1--8. \MR{0438807 (55
  \#11713)}

\bibitem[SZ04]{SZ-gcd}
D.~Stehl\'{e} and P.~Zimmermann, \emph{A binary recursive gcd algorithm},
  Algorithmic number theory, Lecture Notes in Comput. Sci., vol. 3076,
  Springer, Berlin, 2004, pp.~411--425. \MR{2138011}

\bibitem[vzGG13]{MCA_2013}
J.~von~zur Gathen and J~Gerhard, \emph{Modern {C}omputer {A}lgebra}, 3 ed.,
  Cambridge University Press, 2013.

\end{thebibliography}

\end{document}